\theoremstyle{plain}
\newtheorem{theoreme}{Th\'eor\`eme}[section]
\newtheorem{corollaire}[theoreme]{Corollaire}
\newtheorem{proposition}[theoreme]{Proposition}
\newtheorem{lemme}[theoreme]{Lemme}
\theoremstyle{definition}
\newtheorem{definition}[theoreme]{D\'efinition}
\theoremstyle{remark}
\newtheorem{remarque}[theoreme]{Remarque}
\newtheorem{remarques}[theoreme]{Remarques}
\newcommand{\AK}{\mathbf{A}_K^\times}
\newcommand{\Aconc}[1]{\mathbf{A}^{\times}_{K,#1}}
\newcommand{\Afin}{\mathbf{A}^{\times}_{K,\mathrm{f}}}
\newcommand{\Ainf}{\mathbf{A}^{\times}_{K,\infty}}
\newcommand{\blabla}{\varphi}
\newcommand{\blaloc}{\phi}
\newcommand{\C}{\mathbf{C}}
\newcommand{\cond}[1][f]{\mathfrak{#1}}
\newcommand{\End}{\mathrm{End}}
\newcommand{\entiers}[1][K]{\mathcal{O}_{#1}}
\newcommand{\F}{\mathbf{F}}
\newcommand*{\overbar}[1]{\mkern 3mu\overline{\mkern-3mu#1\mkern-3mu}\mkern 3mu} 
\newcommand{\FFbar}{\overbar{\mathbf{F_\ell}}} 
\newcommand{\FFstar}{{\FFbar}^{\mkern-4mu\times}} 
\newcommand{\Qellbar}{\overbar{\Q_\ell}}
\newcommand{\Zellbar}{\overbar{\Z_\ell}}
\newcommand{\Zellbarstar}{{\Zellbar}^{\mkern-4mu\times}}
\newcommand{\Frob}{\mathrm{Frob}}
\newcommand{\Gal}{\mathrm{Gal}}
\newcommand{\G}[1]{\mathrm{G}_{#1}}
\newcommand{\GK}{\mathrm{G}_{K}}
\newcommand{\GL}{\mathrm{GL}}
\newcommand{\GQ}{\mathrm{G}_{\Q}}
\newcommand{\gross}{Gr\"o\ss encharakter}
\newcommand{\Ind}{\operatorname{Ind}}
\newcommand{\norm}{\operatorname{Norm}_{K/\Q}}
\newcommand{\ord}{\mathrm{ord}}
\newcommand{\pgcd}{\mathrm{pgcd}}
\newcommand{\PGL}{\mathrm{PGL}}
\newcommand{\place}{v}
\newcommand{\proj}{\mathbf{P}}
\newcommand{\Q}{\mathbf{Q}}
\newcommand{\Qbar}{\overline{\mathbf{Q}}}
\newcommand{\tr}{\mathrm{tr}}
\newcommand{\Z}{\mathbf{Z}}
\newcommand{\Zbar}{\overline{\mathbf{Z}}}
\DeclareMathOperator{\rec}{rec}
\newcommand{\fp}{\mathfrak{p}}
\newcommand{\ppcm}{\mathrm{ppcm}}
\title[Repr\'esentations di\'edrales et formes CM]{Repr\'esentations galoisiennes di\'edrales \\ et formes \`a multiplication complexe}
\author[N.~Billerey]{Nicolas Billerey $^\ast$}
\thanks{$^\ast$ N.B.~remercie le projet ANR-14-CE25-0015 Gardio de l'Agence Nationale de la Recherche et la F\'ed\'eration de Recherche en Math\'ematiques Rh\^one-Alpes-Auvergne (CNRS FR 3490) pour leur soutien financier.}
\address{Université Clermont Auvergne\\
LMBP 
UMR 6620 -- CNRS\\
Campus des Cézeaux\\
3, place Vasarely\\
F-63178 Aubière, France}
\email{Nicolas.Billerey@uca.fr}
\urladdr{http://math.univ-bpclermont.fr/\char'176billerey/}
\author[F.\thinspace{}A.\thinspace{}E. Nuccio]{Filippo\thinspace{}A.\thinspace{}E. Nuccio Mortarino Majno di Capriglio$^\dagger$}
\thanks{$^\dagger$ Lors de la r\'edaction de ce travail, F.N.~a b\'en\'efici\'e d'une d\'echarge de service de l'Universit\'e Jean Monnet de Saint-Étienne, et tient \`a remercier les coll\`egues qui l'ont rendue possible.}
\address{Université de Lyon\\
Institut Camille Jordan\\
UMR 5208 -- CNRS\\
Université Jean Monnet\\
23, rue du Docteur Paul Michelon\\
F-42023 Saint-Étienne, France}
\email{filippo.nuccio@univ-st-etienne.fr}
\urladdr{http://perso.univ-st-etienne.fr/nf51454h/}
\date{\today}
\subjclass{11F80, 11R37}
\keywords{Repr\'esentations galoisiennes, th\'eorie du corps de classes, formes modulaires \`a multiplication complexe.}
\begin{document}

\begin{abstract}
Pour une repr\'esentation galoisienne di\'edrale en caract\'eristique~${\ell}$ on \'etablit (sous certaines hypoth\`eses) l'existence d'une newform \`a multiplication complexe, dont on contr\^ole le poids, le niveau et le caract\`ere, telle que la repr\'esentation $\ell$-adique associ\'ee est congrue modulo $\ell$ \`a celle de d\'epart.

Given a dihedral Galois representation in characteristic $\ell$, we establish (under some assumption) the existence of a CM newform, whose weight, level and Nebentypus we pin down, such that its $\ell$-adic representation is congruent modulo $\ell$ to the one we started with.
\end{abstract}

\maketitle

\section{Introduction}\label{sec:introduction}

Soit~\(\Qbar\) une cl\^oture alg\'ebrique de~\(\Q\), \(\ell\) un nombre premier et~\(\FFbar\) une cl\^oture alg\'ebrique du corps~\(\F_{\ell}\) \`a~\(\ell\) \'el\'ements. Par repr\'esentation galoisienne, on entend ici un homomorphisme irr\'eductible et continu \(\rho\colon\GQ\rightarrow\GL_2(\FFbar)\) o\`u $\GQ=\Gal(\Qbar/\Q)$. On dit qu'une telle repr\'esentation galoisienne est modulaire si elle est isomorphe \`a la r\'eduction d'une repr\'esentation \(\ell\)-adique associ\'ee \`a une forme parabolique propre~\(f\) de poids \(\geq2\). Dans ce cas, on dit alors aussi que \(\rho\) provient de la forme~\(f\).

On dit qu'une repr\'esentation galoisienne~\(\rho\) est di\'edrale si son image, vue dans \(\PGL_2(\FFbar)\), est isomorphe au groupe di\'e\-dral~\(D_{n}\) d'ordre \(2n\) avec~$n\geq3$. Soit~$C_n$ l'unique sous-groupe cyclique d'ordre~$n$ de~$D_n$. On note alors $K$ le sous-corps quadratique de~$\Qbar$ laiss\'e fixe par le noyau du caract\`ere
\[
\G{\bf{Q}}\stackrel{\proj\rho}{\longrightarrow} D_{n}\longrightarrow D_n/C_n\simeq\{\pm1\}
\]
o\`u $\proj\rho$ est la compos\'ee de $\rho$ avec la projection~$\GL_2(\FFbar)\rightarrow\PGL_2(\FFbar)$.

On rappelle qu'une newform \(g=\sum_{n\geq1}c_nq^n\) est dite \`a multiplication complexe s'il existe un caract\`ere de Dirichlet non trivial~\(\nu\) tel que pour tout~\(p\) dans un ensemble de nombres premiers de densit\'e~\(1\), on a~\(c_p=\nu(p)c_p\). On montre alors que le corps~\(F\) correspondant au noyau de~$\nu$ est quadratique imaginaire et on dit aussi que~\(g\) a multiplication complexe par~\(F\).

Il est bien connu que les repr\'esentations galoisiennes attach\'ees aux formes \`a multiplication complexe sont, en g\'en\'eral, di\'edrales. Par ailleurs, c'est un cas particulier de la c\'el\`ebre conjecture de modularit\'e de Serre (qui \'etait connu longtemps avant la d\'emonstration g\'en\'erale de Khare--Wintenberger) qu'une repr\'esentation di\'edrale impaire provient d'une forme modulaire de poids~$\geq2$. On trouve une preuve moderne de ce résultat dans~\cite{Wie04}, optimale par rapport au poids et au niveau, mais qui ne fournit pas de renseignement sur la nature de la forme modulaire correspondante. La construction d'une telle forme de poids~$\ge2$ est aussi esquissée dans \cite{DeSe74}, en combinant l'exemple p.~517 avec les~\S\S6.9 et~6.10 ; l\`a encore, rien ne justifie qu'elle soit  \`a multiplication complexe.

Dans ce travail, on s'int\'eresse \`a la question plus pr\'ecise de d\'eterminer si une repr\'esentation galoisienne di\'edrale \(\rho\) donn\'ee provient d'une forme modulaire \emph{\`a multiplication complexe de poids~\(k(\rho)\)}, o\`u \(k(\rho)\geq2\) d\'esigne le poids de Serre de la repr\'esentation~\(\rho\) (d\'efini dans~\cite[\S2]{Ser87}). Dans ce cas, on souhaite \'egalement d\'eterminer le niveau minimal de la forme correspondante. 

Le r\'esultat que l'on obtient dans cette direction est le suivant o\`u l'on a not\'e~\(N(\rho)\) la partie premi\`ere \`a~\(\ell\) du conducteur d'Artin de~\(\rho\) (\emph{loc.~cit.},~\no1.2) et $\varepsilon(\rho)$ le caract\`ere associ\'e \`a~$\rho$ par Serre (\emph{loc.~cit.},~\no1.3).
\begin{theoreme}\label{thm:main}
Soit \(\rho\) une repr\'esentation galoisienne di\'edrale. Avec les notations pr\'ec\'edentes, on suppose\textup{:}
\begin{enumerate}[label=\textup{(}\roman*\textup{)}]
	\item \(K\) est quadratique imaginaire\textup{;}
	\item \(2\le k(\rho)\le \ell-1\) et \(\ell\ge5\).
\end{enumerate}
Alors, \(\rho\) est modulaire et provient d'une newform \`a multiplication complexe par le corps~\(K\), de poids~\(k(\rho)\) et de niveau
\begin{equation*}
N'=
\left\{
\begin{array}{ll}
N(\rho) & \text{si $\ell$  est non ramifi\'e dans~$K$ }; \\
\ell^2N(\rho) & \text{si $\ell$ est ramifi\'e dans~$K$}.
\end{array}
\right.
\end{equation*}
De plus, on a les propri\'et\'es suivantes~: 
\begin{enumerate}[label=\textup{(}\alph*\textup{)}]
\item si \(\ell\) est ramifi\'e dans \(K\), alors \(\ell\in\{2k(\rho)-1,\ 2k(\rho)-3\}\);
\item si $\varepsilon(\rho)$ est trivial, alors la forme \`a multiplication complexe peut \^etre choisie de caract\`ere trivial et de niveau divisant~$N'$.
\end{enumerate}
\end{theoreme}
\begin{remarques}\leavevmode
\begin{enumerate}[label=\arabic*.]
\item Dans le cas $\varepsilon(\rho)=1$ et $k(\rho)=2$, le Th\'eor\`eme~\ref{thm:main} est d\'emontr\'e dans~\cite{Nua11}.
\item Le r\'esultat est optimal au sens suivant. D'une part, si \(\rho\) provient d'une newform (\`a multiplication complexe ou non), alors celle-ci est de niveau divisible par~\(N(\rho)\)~: cela r\'esulte d'un th\'eor\`eme de Carayol (voir~\cite[Th\'eor\`eme~(A)]{Car86} et les Sections~1.--2. de~\cite{Car89}). D'autre part, l'exemple~\ref{subsec:Delta} de la Section~\ref{sec:exemples} montre que le niveau propos\'e ne peut, en g\'en\'eral, \^etre abaiss\'e  dans le cas o\`u~$\ell$ est ramifi\'e dans~$K$.
\item L'hypoth\`ese que $K$ soit imaginaire entraîne en particulier que $\rho$ est impaire, dans le sens que le déterminant de $\rho(c)$ vaut $-1$ pour toute conjugaison complexe $c\in\GQ$.
\end{enumerate}
\end{remarques}

Soit~\(A/\Q\) une vari\'et\'e ab\'elienne simple. On note~\(\End_{\Q}(A)\) l'anneau de ses endomorphismes d\'efinis sur~$\Q$. Suivant une terminologie de Ribet, on dit que \(A\) est de type \(\GL_2\) si \(E=\End_{\Q}(A)\otimes\Q\) est un corps de nombres de degr\'e~\(\dim(A)\). Pour toute place finie \(\lambda\) de~\(E\) au-dessus de~\(\ell\) de corps r\'esiduel~\(\F_{\lambda}\), on note alors
\[
\rho_{A,\lambda}\colon\GQ\longrightarrow\GL_2(\F_{\lambda})
\]
la repr\'esentation donnant l'action de~\(\GQ\) sur~\(A[\lambda]\) (voir~\cite[page~7]{Rib92}). Le corollaire suivant au Th\'eor\`eme~\ref{thm:main} ci-dessus g\'en\'eralise~\cite[Theorem~1.6]{Che02} et justifie~\cite[Remark~4.4]{GhPa12}; c'est essentiellement une cons\'equence  de~\cite[Theorem~1]{Nua11}. Pour les d\'efinitions et propri\'et\'es des sous-groupes de Cartan, voir~\cite[\S2]{Ser72} ou~\cite[Chapter~XI, \S2]{Lan76}.

\begin{corollaire}\label{cor}
Soit~\(A/\Q\) une vari\'et\'e ab\'elienne de type \(\GL_2\) de 
conducteur~\(N_A\) et~\(\lambda\) une place finie de \(E=\End_{\Q}(A)\otimes\Q\) 
au-dessus de~\(\ell\). On suppose~\(\ell\geq5\), \(\ell\nmid N_A\) et l'image de 
\(\rho_{A,\lambda}\) contenue dans le normalisateur d'un sous-groupe de Cartan 
non d\'eploy\'e de~\(\GL_2(\F_{\lambda})\). Alors, \(\rho_{A,\lambda}\) provient 
d'une newform \`a multiplication complexe de poids~\(2\) et de 
niveau~\(N(\rho_{A,\lambda})\) \textup{(}divisant~\(N_A\)\textup{)} qui, 
de plus, est de caract\`ere trivial lorsque~$E$ est totalement r\'eel et $A$ a 
tous ses endomorphismes d\'efinis sur~$\Q$.
\end{corollaire}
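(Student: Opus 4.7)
The plan is to apply Theorem~\ref{thm:main} to the representation $\rho = \rho_{A,\lambda}$; the three hypotheses of that theorem must first be verified, after which the resulting weight, level, and character can be read off. Oddness is immediate, as $\det\rho_{A,\lambda}$ equals the mod~$\ell$ cyclotomic character $\chi_\ell$ times a character of finite order, hence $\det\rho_{A,\lambda}(c) = -1$ for $c$ complex conjugation (valid since $\ell\geq 5$ is odd). For hypothesis~(iii), $\ell\geq 5$ holds by assumption, and since $\ell\nmid N_A$ the variety $A$ has good reduction at~$\ell$; a direct inspection of the inertia action on $A[\lambda]$ via the Serre weight recipe~\cite[\S2]{Ser87} then yields $k(\rho_{A,\lambda}) = 2$, so that $2\leq k(\rho)\leq\ell-1$.

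The heart of the argument is to establish that $\rho$ is dihedral in the sense of Section~\ref{sec:introduction} and that the associated field $K$ is imaginary quadratic. By hypothesis the image of $\rho$ lies in the normalizer $\mathcal{N}$ of a non-split Cartan subgroup $\mathcal{C}\subset\GL_2(\F_\lambda)$, so its projective image is contained in $\mathcal{N}/Z\simeq D_m$ for some $m\mid\ell+1$. The key step is to show that $\rho(c)\notin\mathcal{C}$: since $c^2=1$, the matrix $\rho(c)$ has eigenvalues in $\{\pm 1\}\subset\F_\lambda$, whereas non-scalar elements of a non-split Cartan have eigenvalues in $\F_{\lambda^2}\smallsetminus\F_\lambda$; thus $\rho(c)$ would have to be the scalar $\pm I$, contradicting $\det\rho(c)=-1$. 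Consequently $\rho(c)$ projects to a reflection in $D_m$, which simultaneously shows that the projective image of $\rho$ surjects onto $\mathcal{N}/\mathcal{C}\simeq\{\pm 1\}$ (so $\rho$ is dihedral) and that complex conjugation does not fix $K$, making $K$ imaginary quadratic.

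Having verified (i)--(iii), Theorem~\ref{thm:main} produces a CM newform $g$ by $K$, of weight~$2$ and level $N'$. Item~(a) of the theorem would force $\ell\in\{2k(\rho)-1,\,2k(\rho)-3\}=\{3,1\}$ in the ramified case, which is incompatible with $\ell\geq 5$; hence $\ell$ is unramified in $K$ and $N'=N(\rho_{A,\lambda})$. The divisibility $N(\rho_{A,\lambda})\mid N_A$ follows from Carayol's theorem~\cite{Car86} comparing the Artin conductor of $\rho_{A,\lambda}$ with that of the $\lambda$-adic representation, whose conductor equals $N_A$.

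For the final assertion, under the hypothesis $E$ totally real and $\End_{\Q}(A)=\End_{\Qbar}(A)$, Ribet's theory of $\GL_2$-type abelian varieties shows that $A$ is isogenous over $\Q$ to a $\Q$-simple factor of $J_0(M)$ for some integer $M$, and so corresponds to a newform with trivial nebentypus; this gives $\det\rho_{A,\lambda}=\chi_\ell$ and therefore $\varepsilon(\rho_{A,\lambda})=1$ (in Serre's notation, since $k(\rho)=2$). Item~(b) of Theorem~\ref{thm:main} then supplies a CM newform with trivial character and level dividing $N(\rho_{A,\lambda})$. The main obstacle I anticipate is ruling out degenerate small-image situations, i.e.\ ensuring that the projective image is genuinely dihedral of order $\geq 6$ (the convention $n\geq 3$ in this paper); I expect this to be absorbed either by the irreducibility of $\rho_{A,\lambda}$ combined with the explicit structure of $\mathcal{N}/Z$, or by direct reference to~\cite[Theorem~1]{Nua11}, which already covers the $k(\rho)=2$, $\varepsilon(\rho)=1$ case.
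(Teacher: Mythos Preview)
Your overall strategy---verify the three hypotheses of Theorem~\ref{thm:main} and read off the conclusions---is exactly the paper's approach, and your treatment of oddness, the eigenvalue argument placing $\rho(c)$ outside the non-split Cartan, and the exclusion of the ramified case via item~(a) all match the paper's proof.

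There is, however, a genuine problem in your argument for the trivial-character clause. You write that ``Ribet's theory of $\GL_2$-type abelian varieties shows that $A$ is isogenous over $\Q$ to a $\Q$-simple factor of $J_0(M)$''. Ribet did not prove this: he established the converse direction and \emph{conjectured} the one you need. That conjecture is now a theorem, but only via Khare--Wintenberger's proof of Serre's conjecture---machinery far heavier than anything else in this corollary and philosophically at odds with the paper's goal of exhibiting an explicit CM lift without appealing to full modularity. The paper's route is much more direct: by~\cite[Lemma~3.1]{Rib92} one has $\det\rho_{A,\lambda}=\epsilon\chi$ with $\epsilon$ unramified outside~$N_A$, and \cite[Lemma~4.5.1]{Rib76} shows that $\epsilon$ is trivial under the hypotheses $E$ totally real and $\End_{\Q}(A)=\End_{\Qbar}(A)$. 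This gives $\varepsilon(\rho_{A,\lambda})=1$ immediately, with no modularity input.

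A smaller point: your verification of $k(\rho_{A,\lambda})=2$ (``direct inspection via the Serre weight recipe'') is correct in outline but underspecified. The paper makes it precise by invoking \cite[Corollaire~3.4.4]{Ray74} to constrain the exponents of the fundamental characters in the semisimplification of $\rho_{A,\lambda}|_{I_\ell}$, then checking the two resulting cases against the formulas in~\cite[Proposition~3]{Ser87}; in the second case one also uses that the image of $\rho_{A,\lambda}$, lying in the normalizer of a non-split Cartan, contains no element of order~$\ell$.
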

\begin{proof}
D'apr\`es~\cite[Lemma~3.1]{Rib92}, on a \(\det(\rho_{A,\lambda})=\epsilon\chi\), 
o\`u \(\chi\) d\'esigne le caract\`ere cyclotomique mod~\(\ell\) et \(\epsilon\) 
est un caract\`ere non ramifi\'e hors de~\(N_A\) 
et m\^eme trivial lorsque~$E$ est totalement r\'eel et $A$ a tous ses 
endomorphismes d\'efinis sur~$\Q$ \cite[Lemma~4.5.1]{Rib76}. Soit~\(G_{\ell}\) 
un groupe de d\'ecomposition en~\(\ell\) de~\(\GQ\) et~\(I_{\ell}\) son 
sous-groupe d'inertie. On sait que la semi-simplifi\'ee 
de~$\rho_{A,\lambda}|_{I_{\ell}}\) se factorise par l'inertie mod\'er\'ee et 
qu'elle est donc diagonali\-sable (\cite[{Proposition~4~et s.}]{Ser72}). On note 
$\phi$ et $\phi'$ les deux caract\`eres correspondant. 
D'apr\`es~\cite[Corollaire 3.4.4]{Ray74}, on peut \'ecrire 
$\phi=\psi_2^{a}\psi_2'^{b}$ et $\phi'=\psi_2^{n}\psi_2'^{m}$ o\`u \(\psi_2\) et 
$\psi_2'$ sont les deux caract\`eres fondamentaux de niveau~\(2\) 
(\cite[\no1.7]{Ser72}) et o\`u $a,b,n$ et~$m$ sont des entiers \'egaux \`a $0$ 
ou~$1$. Comme $\phi\phi'=\chi=\psi_2\psi_2'$, il vient~:
\[
\{\phi,\phi'\}=\{\psi_2,\psi_2'\}\quad\text{ou}\quad \{\phi,\phi'\}=\{1,\chi\}.
\]
Le premier cas donne~\(k(\rho_{A,\lambda})=2\) d'apr\`es~(2.8.1) 
de~\cite[Proposition~3]{Ser87}. 
En particulier, on a $\varepsilon(\rho_{A,\lambda})=\epsilon$. Par 
ailleurs, l'image de~\(\rho_{A,\lambda}\) ne contient pas d'\'el\'ement 
d'ordre~\(\ell\). Dans le second cas, on a donc
\[
\rho_{A,\lambda}|_{I_{\ell}}\simeq\begin{pmatrix}
\chi & 0 \\
0 & 1\\
\end{pmatrix}
\]
et on conclut \`a~\(k(\rho_{A,\lambda})=2\) d'apr\`es~(2.8.2) de \emph{loc. cit.} Or, la représentation~$\rho_{A,\lambda}$ est impaire d'apr\`es~\cite[Lemma~3.2]{Rib92}
. Pour toute conjugaison complexe~\(c\in \GQ\), l'\'el\'ement \(\rho_{A,\lambda}(c)\) est donc conjugu\'e \`a la matrice diagonale de valeurs propres~$\{-1,+1\}$. Comme celles-ci ne sont pas conjugu\'ees sur~$\F_{\lambda}$, l'\'el\'ement~\(\rho_{A,\lambda}(c)\) n'est contenu dans aucun sous-groupe de Cartan non d\'eploy\'e de~$\GL_2(\F_{\lambda})$ (\cite[page~181]{Lan76}). En particulier, la repr\'esentation $\rho_{A,\lambda}$ est di\'edrale et le corps quadratique~\(K\) correspondant est imaginaire. Comme~\(\ell\geq5\) et \(k(\rho_{A,\lambda})=2\), on d\'eduit du Th\'eor\`eme~\ref{thm:main} le r\'esultat voulu.
\end{proof}

L'article est organis\'e de la fa\c con suivante. La Section~\ref{sec:TCC} contient des rappels sur les \gross e et un r\'esultat (Proposition~\ref{prop:TCC}) essentiel \`a la d\'emonstration du th\'eor\`eme principal (Th\'eor\`eme~\ref{thm:main}) qui, elle, occupe la Section~\ref{sec:demonstration}. La derni\`ere section est consacr\'ee \`a deux exemples num\'eriques.

\medskip

\emph{Remerciements.-- }Les auteurs remercient Gebhard B\"ockle pour une question ayant men\'e \`a ce travail. N.B.~est \'egalement reconnaissant envers Imin Chen, Luis Dieulefait, Pierre Lezowski et Joan Nualart pour d'int\'eressantes discussions.

\section{Une proposition de la th\'eorie du corps de classes}\label{sec:TCC}

Dans cette section, on rappelle quelques notions sur les \gross e puis on 
d\'emontre une proposition (Proposition~\ref{prop:TCC}) de la th\'eorie du corps 
de classes qui nous sera utile au paragraphe~\ref{subsec:dem_thm_main}. Dans 
toute la suite, $K$ d\'esigne un corps quadratique imaginaire contenu 
dans~$\Qbar$. On identifie~$\Qbar$ \`a un sous-corps de~$\C$ et on fixe, une 
fois pour toutes, un plongement~$\Qbar\hookrightarrow\Qellbar$ o\`u $\Qellbar$ 
d\'esigne une cl\^oture alg\'ebrique de~$\Q_\ell$ de corps r\'esiduel~$\FFbar$. 
Cela induit une place de~$\Qbar$ au-dessus de~$\ell$ not\'ee~$\place$.

On adopte par ailleurs les notations suivantes~:

\(\mathcal{O}_K\) est l'anneau des entiers de~\(K\), \(\mathcal{O}_K^\times\) 
son groupe des unit\'es et $-D_K$ son discriminant;

\(\Sigma_K\) est l'ensemble des places ultram\'etriques de~\(K\);

\(K_w\) est  le compl\'et\'e de~\(K\) en~\(w\) (\(w\in\Sigma_K\));

\(\mathcal{O}_w^{\times}\), \(\pi_w\), \(p_w\) sont respectivement le groupe des unit\'es, une uniformisante et la carac\-t\'eristique r\'esiduelle du corps local~\(K_w\) (\(w\in\Sigma_K\));

\(\mathcal{O}_w^{(m)}\) est l'ensemble des unit\'es \(u\in\mathcal{O}_w^{\times}\) congrues \`a $1$ modulo~$\pi_w^m$ avec $m\geq1$ entier (\(w\in\Sigma_K\));

\(\mathfrak{p}_w\) est l'id\'eal premier de~\(\mathcal{O}_K\) induit par~\(w\) (\(w\in\Sigma_K\));

\(\AK\) est le groupe des id\`eles de $K$; si \(a\in\AK\) et \(w\in\Sigma_K\), on note~\(a_w\) la composante de~\(a\) en~\(w\);

$C_K=\AK/K^\times$ est  le groupe des classes d'id\`eles de~\(K\);

$\Afin$ (resp. $\Ainf$) est l'ensemble des id\`eles finis (resp. infinis) de~$K$.

\'Etant donn\'es une place~$w\in\Sigma_K$ et un id\'eal fractionnaire~$\mathfrak{m}$ de~$K$, on note~$\ord_w(\mathfrak{m})$ la valuation de~$\mathfrak{m}$ en l'id\'eal premier~$\mathfrak{p}_w$. On d\'efinit
\[
U_{\mathfrak{m}}=\left\{a\in\AK\, \big|\, w(a_w-1)\geq\ord_w(\mathfrak{m}),\ \forall{w\in\Sigma_K}\right\}
\]
et $E_{\mathfrak{m}}=U_{\mathfrak{m}}\cap\mathcal{O}_K^{\times}$.

Enfin, si $\mathfrak{p}$ est un id\'eal premier de~$\mathcal{O}_K$, on note $\pi_{\mathfrak{p}}$ une uniformisante locale en la place de~$K$ induite par~$\mathfrak{p}$.

\subsection{}\label{ss:car_S_m}  Par \gross~\(\chi\) de~\(K\) on entend ici un homomorphisme de groupes continu
\[
\chi\colon\AK\longrightarrow\C^{\times}
\]
tel que~\(\chi(K^{\times})=1\). Si $w\in\Sigma_K$, on note $\chi_w$ la composante en~$w$ de~$\chi$ et on d\'esigne par $\chi_{\mathrm{f}}$ (resp.~$\chi_{\infty}$) la partie finie (resp.~infinie) de~$\chi$. On dit que $\chi$ est ramifi\'e en $w\in\Sigma_K$ s'il existe une unit\'e~$u\in\mathcal{O}_w^{\times}$ telle que~$\chi_w(u)\not=1$. Dans le cas contraire, on dit que~\(\chi\) est non ramifi\'e en~$w$. Par continuit\'e, $\chi$ est non ramifi\'e en toutes les places de~$K$ sauf un nombre fini. Si $\chi$ est ramifi\'e en~$w\in\Sigma_K$, il existe un entier $m_w\geq1$ minimal tel que $\chi_w\left(\mathcal{O}_w^{(m_w)}\right)=1$. Le conducteur de~$\chi$ est alors, par d\'efinition, l'id\'eal de~$\mathcal{O}_K$ 
\[
\mathfrak{f}_\chi=\prod_{w}\mathfrak{p}_w^{m_w}
\]
o\`u $w\in\Sigma_K$ parcourt l'ensemble des places o\`u $\chi$ est ramifi\'e.

On dit enfin qu'un \gross\ \(\chi\) est de type \`a l'infini~$(m,n)$ avec $m,n\in\Z$ lorsque 
\[
\chi_\infty(z)=\frac{1}{z^{m}(z^c)^{n}},\quad \text{pour tout }z\in\Ainf\simeq\C^\times,
\]
o\`u $z^c$ d\'esigne le conjugu\'e complexe de~$z$. Un tel \gross\ est de type~(\(A_0\)) dans la terminologie de Weil (\cite[page~4]{Wei56}).

\subsection{}\label{subsec:teichmuller} On note~\(z\mapsto\overline{z}\) l'homomorphisme de r\'eduction~\(\Zellbar\rightarrow\FFbar\) o\`u~$\Zellbar$ d\'esigne l'anneau des entiers de~$\Qellbar$.

Soit $\Zbar$ l'anneau des entiers de~$\Qbar$. Il existe alors (\cite[Chapter~2]{Was97}) un unique homomorphisme de groupes~\(\FFstar\rightarrow\Zbar^{\times}\) not\'e~\(x\mapsto\widetilde{x}\) \`a valeurs dans les racines de l'unit\'e d'ordre premier \`a~\(\ell\) tel que 
\[
x=\overline{\widetilde{x}},\quad\text{pour tout }x\in\FFstar.
\]
Par ailleurs, si~\(\zeta\) est une racine de l'unit\'e d'ordre premier \`a~\(\ell\), on a~\(\zeta=\widetilde{\overline{\zeta}}\).

\'Etant donn\'e un groupe~\(G\) et un caract\`ere~\(\varphi\colon G\to\FFstar\), on d\'efinit le rel\`evement de Teichmüller (ou multiplicatif)$$
\widetilde{\varphi}\colon G\longrightarrow\Qbar^\times
$$ 
de~\(\varphi\) comme le compos\'e de~\(\varphi\) avec le morphisme~\(x\mapsto\widetilde{x}\) ci-dessus. C'est le seul caract\`ere \`a valeurs dans les racines de l'unit\'e d'ordre premier \`a~\(\ell\) de~\(\Qbar\) v\'erifiant
\[
\overline{\widetilde{\varphi}(x)}=\varphi(x),\quad\text{pour tout }x\in G.
\]

\subsection{} \'Etant donn\'e un \gross\ $\chi$ de~$K$ de type \`a l'infini~$(m,n)$ avec $m,n\geq0$, on  v\'erifie que pour toute place~$w\in\Sigma_K$, on a~$\chi(\pi_w)\in\Zbar$, o\`u l'on note encore $\pi_w\in\Afin$  l'id\`ele ayant composantes triviales en dehors de $w$ et qui coïncide avec  l'uniformisante~$\pi_w$ choisie en~$w$. En effet, si $h$ d\'esigne le nombre de classes de~$K$, l'id\'eal $\mathfrak{p}_w^h$ est principal et engendr\'e par, disons, $a\in\mathcal{O}_K$. Soit $x$ l'id\`ele $\pi_w^ha^{-1}$. Alors~$x$ est une unit\'e en toute place finie de~$K$ et on a
\[
\chi(\pi_w)^h=\chi\left(\pi_w^h\right)\chi\left(a^{-1}\right)=\chi(x)=\chi_{\mathrm{f}}(x)a^m (a^c)^n\in\Zbar.
\]
La proposition suivante est le r\'esultat principal de cette section; elle g\'en\'eralise notamment~\cite[Proposition~3.1]{Che02}.
\begin{proposition}\label{prop:TCC} Supposons~$\ell\geq5$. Soit $\alpha$ un \gross\ de~$K$ d'image finie et de conducteur $\mathfrak{f}_\alpha$, et soit $k$ un entier~$\ge2$. Alors, il existe un \gross\ $\delta$ de $K$ de type \`a l'infini~$(k-1,0)$ et conducteur~\(\mathfrak{f}_{\delta}\)  v\'erifiant
\begin{equation}\label{eq:ordw(delta)}
\ord_w(\mathfrak{f}_\delta)=\ord_w(\mathfrak{f}_\alpha)\quad \text{pour toute place }w\in\Sigma_K\setminus\{v\}\tag{\ref{prop:TCC}--1}
\end{equation}
et tel quel pour toute place~$w\in\Sigma_K\setminus\{v\}$ premi\`ere \`a~$\mathfrak{f}_\alpha$, on a 
\begin{equation}\label{eq:congruences}
\overline{\delta(\pi_w)}=\overline{\alpha(\pi_w)}.\tag{\ref{prop:TCC}--2}
\end{equation}
De plus, on a 
\begin{enumerate}[label=\textup{(}\alph*\textup{)}]
\item\label{condition:falpha>1} si $\ord_{\place}(\mathfrak{f}_\alpha)\geq 2$, alors $\ord_{\place}(\mathfrak{f}_\delta)=\ord_{\place}(\mathfrak{f}_\alpha)$\textup{;}
\item\label{condition:falpha=1} si $\ord_{\place}(\mathfrak{f}_\alpha)=1$, alors
\[
\ord_{\place}(\mathfrak{f}_\delta)=
\left\{
\begin{array}{ll}
0 & \text{si pour toute unit\'e $u\in\mathcal{O}_{\place}^\times$ on a $\alpha_{\place}(u)=\widetilde{\overline{u}}^{1-k}$} \\
1 & \text{sinon}
\end{array}
\right.;
\]
\item\label{condition:falpha<1} si $\ord_{\place}(\mathfrak{f}_\alpha)=0$, alors 
\[
\ord_{\place}(\mathfrak{f}_\delta)=
\left\{
\begin{array}{ll}
0 & \text{si $\ell^f-1\mid k-1$} \\
1 & \text{sinon}
\end{array}
\right.
\] 
o\`u $f$ est le degr\'e r\'esiduel de $K$ en~\(\ell\).
\end{enumerate}
\end{proposition}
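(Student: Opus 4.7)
Plan~: on construira $\delta$ sous la forme d'un produit $\delta=\alpha\cdot\beta$, où $\beta$ sera un \gross\ de $K$ de type à l'infini $(k-1,0)$, non ramifié en toute place finie distincte de~$v$, et tel que $\overline{\beta(\mathfrak{a})}=1$ pour tout idéal fractionnaire $\mathfrak{a}$ de $\mathcal{O}_K$ premier à~$v$. Une telle décomposition donne automatiquement le bon type à l'infini pour~$\delta$. Aux places finies $w\neq v$, la trivialité de $\beta_w$ sur $\mathcal{O}_w^\times$ entraînera $\delta_w|_{\mathcal{O}_w^\times}=\alpha_w|_{\mathcal{O}_w^\times}$, d'où~\eqref{eq:ordw(delta)}~; pour $w\neq v$ premier à $\mathfrak{f}_\alpha$, on aura $\overline{\delta(\pi_w)}=\overline{\alpha(\pi_w)}\cdot\overline{\beta(\pi_w)}=\overline{\alpha(\pi_w)}$, d'où~\eqref{eq:congruences}.

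Pour construire $\beta$, on utilisera la théorie du corps de classes. L'hypothèse $\ell\geq5$ entraîne que toute unité non triviale de $\mathcal{O}_K^\times$ est non congrue à~$1$ modulo~$v$ (car $\norm(u-1)\in\{1,2,3,4\}$), si bien que $E_v=\{1\}$. L'application $(a)\mapsto a^{k-1}$, définie sur les idéaux principaux admettant un générateur $a\equiv1\pmod v$, sera ainsi bien définie~; on l'étendra, par divisibilité de~$\Qbar^\times$, en un caractère $\beta_0$ sur tous les idéaux fractionnaires premiers à~$v$, qui s'identifiera à un \gross\ de type à l'infini~$(k-1,0)$ et de conducteur divisant~$v$. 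Le caractère $\overline{\beta_0}$ étant trivial sur les idéaux principaux engendrés par un $a\equiv1\pmod v$ (puisque $\overline{a}=1$), il se factorise par le groupe fini des classes de rayons modulo~$v$. On relèvera alors $\overline{\beta_0}^{-1}$ par Teichmüller en un caractère $\tilde\eta$ d'ordre premier à~$\ell$ (\S\ref{subsec:teichmuller}), et l'on posera $\beta:=\beta_0\cdot\tilde\eta$, qui vérifiera $\overline{\beta}=1$ sur tous les idéaux premiers à~$v$.

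Il restera enfin à calculer $\ord_v(\mathfrak{f}_\delta)$, c'est-à-dire le conducteur de $\delta_v=\alpha_v\cdot\beta_v$ en~$v$. Le point clé sera la détermination explicite de $\beta_v|_{\mathcal{O}_v^\times}$~: la relation $\prod_w\beta_w(a)=1$ appliquée à $a\in K^\times$ premier à~$v$, combinée à la trivialité de $\beta_w|_{\mathcal{O}_w^\times}$ pour $w\neq v$, donnera $\beta_v(a)=a^{k-1}/\beta((a))$, dont la réduction vaudra $\overline{a}^{k-1}$ grâce à $\overline{\beta}=1$. Étant à valeurs dans les racines de l'unité d'ordre premier à~$\ell$ (son conducteur divisant~$v$), $\beta_v|_{\mathcal{O}_v^\times}$ se trouvera uniquement caractérisé par cette congruence~: $\beta_v(u)=\widetilde{\overline{u}}^{k-1}$ pour $u\in\mathcal{O}_v^\times$. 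Les assertions~\ref{condition:falpha>1},~\ref{condition:falpha=1} et~\ref{condition:falpha<1} en découleront alors directement. Dans~\ref{condition:falpha>1}, le conducteur de~$\beta_v$ étant au plus~$v$ et celui de~$\alpha_v$ au moins~$v^2$, aucune compensation n'est possible et $\ord_v(\mathfrak{f}_\delta)=\ord_v(\mathfrak{f}_\alpha)$~; dans~\ref{condition:falpha=1}, la trivialité de $\delta_v|_{\mathcal{O}_v^\times}$ équivaudra à l'égalité $\alpha_v(u)=\widetilde{\overline{u}}^{1-k}$ demandée~; dans~\ref{condition:falpha<1}, où $\alpha_v$ est trivial sur $\mathcal{O}_v^\times$, elle équivaudra à la trivialité de $\widetilde{\overline{\cdot}}^{k-1}$ sur le groupe cyclique $(\mathcal{O}_K/v)^\times$ d'ordre $\ell^f-1$, c'est-à-dire à $\ell^f-1\mid k-1$. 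La principale difficulté technique résidera dans l'articulation précise entre la description idélique d'un \gross\ et sa description comme caractère sur les idéaux, notamment pour garantir que le produit $\alpha\cdot\beta$ se comporte comme attendu aux places distinctes de~$v$.
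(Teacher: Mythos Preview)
Your proposal is correct and follows essentially the same route as the paper: your $\beta_0$ is the paper's \gross\ $\beta$ (constructed there id\'elically via a lemma of Serre rather than by extension from principal ideals), your Teichm\"uller correction $\tilde\eta$ plays exactly the role of the paper's auxiliary character~$\gamma$, and the paper likewise sets $\delta=\alpha\beta\gamma$. The key identity $\beta_v(u)=\widetilde{\overline{u}}^{\,k-1}$ on~$\mathcal{O}_v^\times$ that you extract from the product formula is precisely the paper's relation~\eqref{eq:delta_et_alpha_place}, from which (a)--(c) are deduced in the same way.
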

\begin{proof}
Comme on a suppos\'e $K$ quadratique imaginaire et $\ell\ge5$, on a, avec les notations pr\'ec\'edentes, $E_{\mathfrak{p}_\place}=U_{\mathfrak{p}_\place}\cap\mathcal{O}_K^\times=\{1\}$. 
D'apr\`es~\cite[page~286]{Ser72}, il existe alors \(f\in \operatorname{Hom}(\AK,\C^\times)\) tel que
\begin{enumerate}[label=\Roman*.]
\item \(f(x)=1\) pour tout \(x\in U_{\mathfrak{p}_\place}\),
\item\label{item:car_S_m_cond_ii} \(f(x)=x^{k-1}\) pour tout \(x\in K^{\times}\).
\end{enumerate}
Posons alors 
\[
\beta(x)=f(x)x_{\infty}^{1-k},\quad\text{pour tout~\(x\in\AK\)},
\]
o\`u~\(x_{\infty}\in\Ainf\) d\'esigne la composante \`a l'infini de l'id\`ele~\(x\).  Comme $U_{\mathfrak{p}_\place}$ est ouvert dans~$\AK$ et contenu dans le noyau de $f$, il s'en suit que~$f$, puis l'homomorphisme~\(\beta\), sont continus. Par ailleurs, si~\(x\in K^{\times}\), on a d'apr\`es la propri\'et\'e~\ref{item:car_S_m_cond_ii} ci-dessus
\[
\beta(x)= f(x) x^{1-k} =1.
\]
Autrement dit, \(\beta\) est un \gross\ de~\(K\). De plus, si~\(w\in\Sigma_K\) et~\(x\in\mathcal{O}_w^{\times}\cap U_{\mathfrak{p}_\place}\), on a~\(\beta_w(x)=1\). En particulier, \(\beta\) est de conducteur divisant~\(\mathfrak{p}_\place\). Enfin, si $x\in\Ainf$, on a $x\in U_{\mathfrak{p}_\place}$, puis $\beta_\infty(x)=x^{1-k}$. Le \gross\ $\beta$ est donc de type \`a l'infini~$(k-1,0)$.

Comme l'a montr\'e Weil dans \cite[page~5]{Wei56}, il existe une extension finie~$L$ de~$\Q$, qu'on regarde plong\'ee dans $\Qbar$, qui contient les valeurs de $\beta_w$ pour toute place~$w$. Quitte \`a l'\'elargir, on peut de plus supposer qu'elle contient~$K$. On note~$L_\place$ le compl\'et\'e de~$L$ en la place de~$L$ induite par la place~$\place$ de~$\Qbar$ et on commence par d\'efinir un caract\`ere $\gamma_0\colon \AK\rightarrow L_{\place}^\times$ par la r\`egle
\[
\left\{\begin{array}{lclr}
(\gamma_0)_w & =&\beta_w^{-1}& \quad (w\in\Sigma_K\setminus\{v\})\\
(\gamma_0)_{\place}& =&\beta_{\place}^{-1}\cdot(-)^{k-1}&\\
(\gamma_0)_\infty& =& 1 & 
\end{array}\right.,
\]
o\`u $(-)^{k-1}$ d\'esigne l'\'el\'evation \`a la puissance~$k-1$. En tant que produit de caract\`eres continus, $\gamma_0$ est continu. Soit $a\in K^\times$. On a alors
\begin{align*}
\gamma_0(a) & =\Big(\prod_{w\in\Sigma_K\setminus\{\place\}}\beta_w(a)^{-1}\Big)\cdot\beta_{\place}(a)^{-1}\cdot a^{k-1}\\
 & =\beta_\mathrm{f}(a)^{-1}\cdot a^{k-1}=\beta_\infty(a)\cdot a^{k-1}=1
\end{align*}
car $\beta$ est trivial sur les id\`eles principaux et de type \`a l'infini~$(k-1,0)$. Il s'en suit que l'on peut regarder $\gamma_0$  en tant que caract\`ere continu de \(C_K\) et aussi du quotient~\(C_K/\Aconc{\infty}\). Comme $K$ est totalement imaginaire, l'isomorphisme de r\'eciprocit\'e de la th\'eorie du corps de classes globale identifie ce quotient avec l'ab\'elianis\'e $\GK^\text{ab}$ de~$\GK$, et montre en particulier qu'il s'agit d'un groupe compact. L'image de l'homomorphisme continu~$\gamma_0$ est donc contenue dans le groupe~$\mathcal{O}_{L_\place}^\times$ des unit\'es de~$L_\place$. On note
\[
\overline{\gamma_0}\colon \AK\longrightarrow \FFstar
\]
la compos\'ee de~$\gamma_0$ avec l'homomorphisme de r\'eduction $\Zellbar\rightarrow\FFbar$. C'est un caract\`ere d'image finie. On d\'efinit alors
\[
\gamma=\widetilde{\overline{\gamma_0}}\colon \AK\longrightarrow \Zbar^\times\subset\C^\times
\]
comme \'etant le rel\`evement de Teichmüller de~$\overline{\gamma_0}$ 
d\'etaill\'e au~paragraphe~\ref{subsec:teichmuller}. Il s'agit donc d'un 
\gross\ de type \`a l'infini~$(0,0)$, type dit \og trivial\fg.

Pour compl\'eter la preuve de la proposition, posons $\delta=\alpha\beta\gamma$: il s'agit bien d'un \gross\ de type \`a l'infini \'egal au type \`a l'infini de $\beta$, \`a savoir $(k-1,0)$, puisque tant $\alpha$ que $\gamma$ ont un type \`a l'infini trivial. Il reste \`a pr\'esent \`a d\'eterminer~$\mathfrak{f}_\delta$ et \`a d\'emontrer les congruences~\eqref{eq:congruences}.

Montrons tout d'abord les congruences~\eqref{eq:congruences}. Soit~\(w\in\Sigma_K\setminus\{v\}\) une place ultram\'etrique de~\(K\) en laquelle~$\alpha$ est non ramifi\'e. D'apr\`es ce qui pr\'ec\`ede, on a~\(\gamma_0(\pi_w)=\beta(\pi_w)^{-1}\in\Zellbarstar\) et, par ailleurs,~\(\alpha(\pi_w)\in\Zellbar\). Par r\'eduction, il vient donc
\begin{equation*}
\overline{\delta(\pi_w)}=\overline{\alpha(\pi_w)}\, \overline{\beta(\pi_w)}\, \overline{\beta(\pi_w)^{-1}}
=\overline{\alpha(\pi_w)}
\end{equation*}
et le r\'esultat souhait\'e.

\'Etudions \`a pr\'esent le conducteur~$\cond_\delta$. Soit $w$ une place finie de~$K$ distincte de~$\place$ et $u\in\mathcal{O}_w^\times$. Comme $\beta$ est non ramifi\'e en~$w$, on a $\beta_w(u)=1=\gamma_w(u)$ et donc \(\delta_w(u)=\alpha_w(u)\), ce qui entra\^ine~\eqref{eq:ordw(delta)}.

Passons maintenant \`a la d\'emonstration des points~\ref{condition:falpha>1}--\ref{condition:falpha<1}. Soit~$u\in\mathcal{O}_\place^\times$. Comme~$\mathfrak{f_\beta}\mid\mathfrak{p}_v$, le caract\`ere $\beta_\place$ est trivial sur~$\mathcal{O}_\place^{(1)}$ et \(\beta_\place(u)\) est une racine de l'unit\'e d'ordre premier \`a~\(\ell\). On a donc~\(\gamma_\place(u)=\beta_\place(u)^{-1}\widetilde{\overline{u}}^{k-1}\), puis
\begin{equation}\label{eq:delta_et_alpha_place}
\delta_\place(u)=\alpha_\place(u)\widetilde{\overline{u}}^{k-1}.
\end{equation}
En particulier, on a \(\delta_\place(u)=\alpha_\place(u)\), pour toute unit\'e $u\in\mathcal{O}_\place^{(1)}$.

\begin{enumerate}[label=(\alph*)]
\item Supposons $\ord_{\place}(\mathfrak{f}_\alpha)\geq 2$. Alors, il existe une unit\'e $u\in\mathcal{O}_\place^{(1)}$ telle que $\delta_\place(u)=\alpha_\place(u)\neq 1$. D'o\`u $\ord_\place(\mathfrak{f}_\delta)\geq2$ et \(\ord_\place(\mathfrak{f}_\delta)=\ord_\place(\mathfrak{f}_\alpha)$ car les restrictions de~$\delta_\place$ et~$\alpha_\place$ \`a~$\mathcal{O}_\place^{(1)}$ co\"incident.
\item Supposons $\ord_{\place}(\mathfrak{f}_\alpha)=1$. Pour toute unit\'e $u\in\mathcal{O}_\place^{(1)}$, on a alors $\delta_\place(u)=1$, d'o\`u $\ord_{\place}(\mathfrak{f}_\delta)\leq 1$. Par ailleurs, d'apr\`es~\eqref{eq:delta_et_alpha_place}, on a $\ord_\place(\mathfrak{\mathfrak{f}_\delta})=1$ si, et seulement si, il existe une unit\'e~$u\in\mathcal{O}_\place^\times$ telle que $\alpha_\place(u)\not=\widetilde{\overline{u}}^{1-k}$. 
\item Supposons $\ord_{\place}(\mathfrak{f}_\alpha)=0$. Alors pour toute unit\'e $u\in\mathcal{O}_\place^\times$, on a \(\delta_\place(u)=\widetilde{\overline{u}}^{k-1}\) et donc le caract\`ere~$\delta_\place$ est trivial sur~$\mathcal{O}_\place^{(1)}$, c'est-\`a-dire $\ord_{\place}(\mathfrak{f}_\delta)\leq 1$. Par ailleurs, le quotient $\mathcal{O}_\place^\times/\mathcal{O}_\place^{(1)}$ s'identifie \`a~$\left(\mathcal{O}_K/\mathfrak{p}_\place\right)^\times$. En particulier, on a $\ord_\place(\mathfrak{\mathfrak{f}_\delta})=0$ si, et seulement si, $\ell^f-1$ divise~$k-1$.
\end{enumerate}
Cela termine la d\'emonstration de la Proposition~\ref{prop:TCC}.
\end{proof}

\section{D\'emonstration du th\'eor\`eme principal}\label{sec:demonstration}

\subsection{\'Etude locale}\label{subsec:descr_loc} On commence par rappeler la d\'efinition centrale suivante.
\begin{definition}
Une repr\'esentation galoisienne (i.e.~un homomorphisme de groupes irr\'eductible et continu) $\rho\colon\GQ\rightarrow\GL_2(\FFbar)$ est dite di\'edrale si son image, vue dans \(\PGL_2(\FFbar)\), est isomorphe au groupe di\'e\-dral~\(D_{n}\) d'ordre \(2n\) avec~$n\geq3$.
\end{definition}
Soit \(\rho\colon\GQ\rightarrow\GL_2(\FFbar)\) une repr\'esentation galoisienne satisfaisant aux hypoth\`eses du Th\'eor\`eme~\ref{thm:main}. On v\'erifie \`a l'aide de la classification des sous-groupes finis de~$\GL_2(\FFbar)$ (voir par exemple \cite[Theorem~3.4]{Blo67}) que l'ordre de l'image de~$\rho$ est n\'ecessairement premier \`a~$\ell$. Avec les notations de l'Introduction, on a donc en particulier $\pgcd(n,\ell)=1$. Par construction, $\proj\rho(\GK)=C_n$ est cyclique, avec $\GK=\Gal(\Qbar/K)$. Comme~$\proj\rho(\GK)$ s'obtient \`a partir de~$\rho(\GK)$ apr\`es passage au quotient par des \'el\'ements de son centre, on en d\'eduit que~$\rho(\GK)$ est ab\'elienne. Ainsi, il existe deux caract\`eres 
\begin{equation*}
\blabla,\blabla'\colon\GK\longrightarrow\FFstar
\end{equation*}
tels que $\blabla\not=\blabla'$ et~\(\rho|_{\G{K}}\simeq\blabla\oplus\blabla'\). Soit~\(\sigma\in\G{\bf{Q}}\setminus\G{K}\). On d\'efinit $\widehat{\blabla}\colon\GK\rightarrow\FFstar$ par~\(\widehat{\blabla}(\tau)=\blabla(\sigma^{-1}\tau\sigma)\) pour tout \(\tau\in\G{K}\)~: cette d\'efinition est ind\'ependante de~$\sigma$ car l'image de $\blabla$ est ab\'elienne. Soit $v$ un vecteur propre pour~$\rho|_{\G{K}}$ de valeur propre~$\blabla(\tau)$ pour tout~$\tau\in\GK$. On a alors, pour tout~$\tau\in\GK$,
\[
\rho(\tau)\rho(\sigma)v=\rho(\sigma\sigma^{-1}\tau\sigma)v=\rho(\sigma)\rho(\sigma^{-1}\tau\sigma)v=\widehat{\blabla}(\tau)\rho(\sigma)v
\]
d'o\`u l'on d\'eduit~\(\blabla'=\widehat{\blabla}\). On a donc~$\rho\simeq\Ind^{\GQ}_{\GK}(\blabla)\simeq\Ind^{\GQ}_{\GK}(\widehat{\blabla})$. Dans la base~$\{v,\rho(\sigma)v\}$, la matrice de~$\rho(\sigma)$ s'\'ecrit
\begin{equation}\label{eq:rho_sigma}
\begin{pmatrix}
0 & \blabla(\sigma^2) \\
1 & 0
\end{pmatrix}.
\end{equation}
L'objectif de ce paragraphe est d'\'etudier la ramification de $\blabla$ et de~$K$ en~$\ell$. Soit~\(\G{\ell}\) le groupe de d\'ecomposition de \(\G{\bf{Q}}\) relatif au plongement fix\'e~$\Qbar\hookrightarrow\Qellbar$ et \(I_{\ell}\) son sous-groupe d'inertie. On note pour simplifier \(k=k(\rho)\), avec $k(\rho)$ comme dans~\cite[\S2]{Ser87}. On rappelle que l'on a suppos\'e \(2\le k\le \ell-1\) et \(\ell\ge5\).

L'image de~$\rho$ est d'ordre premier \`a~$\ell$. En particulier, $\rho|_{I_{\ell}}\) se factorise par l'inertie mod\'er\'ee et est diagonali\-sable (\cite[Proposition~4~et s.]{Ser72}). On note $\blaloc$ et $\blaloc'$ les deux caract\`eres correspondants. Ils sont de niveau~$1$ ou~$2$ et lorsqu'ils sont de niveau~$2$, on a $\blaloc'=\blaloc^\ell$ (\cite[Proposition~1]{Ser87}). On rappelle que~$\mathfrak{p}_\place$ d\'esigne l'id\'eal premier de~$K$ au-dessus de~$\ell$ induit par la place~$\place$ (c'est-\`a-dire par le plongement $\Qbar\hookrightarrow\Qellbar$). On consid\`ere alors les groupes de d\'ecomposition \(\G{\mathfrak{p}_\place}\le \G{\ell}\) et d'inertie \(I_{\mathfrak{p}_\place}\le I_{\ell}\).

\begin{enumerate}[label=(\alph*)]
	\item \label{liste:niveau_1}On suppose que $\blaloc$ et $\blaloc'$ sont 
de niveau \(1\). Comme l'image de \(\rho\) ne contient pas d'\'el\'ement 
d'ordre~\(\ell\), l'image par~$\rho$ de l'inertie sauvage est triviale. Comme on 
a suppos\'e \(2\le k\le \ell-1\), on a, d'apr\`es~\cite[(2.3.2)]{Ser87},
	\[
	\rho|_{I_{\ell}}=\begin{pmatrix}
	1 & 0 \\
	0 & \chi^{k-1} \\
	\end{pmatrix}
	\]
	o\`u \(\chi\) d\'esigne le caract\`ere cyclotomique mod~\(\ell\).
	\begin{enumerate}[label=\ref{liste:niveau_1}-\arabic*.]

		\item Supposons \(\ell\) non ramifi\'e dans \(K\). Alors, \(I_{\mathfrak{p}_\place}\) s'identifie \`a \(I_{\ell}\) et, comme \(\ell>k\), le caract\`ere \(\chi^{k-1}\) est non trivial. On en d\'eduit que soit~\(\blabla\) est  non ramifi\'e en \(\sigma(\mathfrak{p}_\place)\) et que sa restriction au sous-groupe d'iner\-tie en \(\mathfrak{p}_\place\) est donn\'ee par la puissance \((k-1)\)-i\`eme du caract\`ere cyclotomique, ou qu'il en est ainsi pour $\blabla'$. Si \(\sigma(\mathfrak{p}_\place)=\mathfrak{p}_\place\), c'est une contradiction dans les deux cas. On a donc montr\'e que \(\ell\) est d\'ecompos\'e dans~\(K\). Par ailleurs, quitte \`a remplacer~\(v\) par~\(\sigma v\), on peut supposer~\(\blabla\) non ramifi\'e en~\(\sigma(\mathfrak{p}_\place)\) et mod\'er\'ement ramifi\'e en~\(\mathfrak{p}_\place\) avec, pour tout \(\tau\in I_{\mathfrak{p}_\place}\), \(\blabla(\tau)=\chi^{k-1}(\tau)\).
		
		\item Supposons \(\ell\) ramifi\'e dans \(K\). Alors, d'apr\`es la d\'efinition (donn\'ee en Introduction) du corps~$K$, on a~\(\proj\rho(I_{\ell})\not\subset C_n\). Or, l'image du caract\`ere $\chi^{k-1}$ est d'ordre $(\ell-1)/\pgcd(\ell-1,k-1)$ et, en vue de la description de $\rho\vert_{I_\ell}$, elle coïncide avec l'image de \(\proj\rho(I_{\ell})\), qui est donc cyclique et par suite d'ordre~$2$. On en tire \(\ell-1=2\cdot\pgcd(\ell-1,k-1)\), d'o\`u~\(\ell=2k-1\) car~$\ell>k$. En outre, le fait que $\proj\rho(I_\ell)$ soit d'ordre $2$ montre que l'indice de ramification de~$\ell$ dans l'extension $\Qbar^{\operatorname{Ker}(\proj\rho)}/\Q$ est \'egal \`a $2$ et par suite $\proj\rho(I_{\mathfrak{p}_\place})=1$. Il vient donc que \(\blabla\) est non ramifi\'e en \(\mathfrak{p}_\place\).
\end{enumerate}
	\item On suppose que $\blaloc$ et $\blaloc'$ sont de niveau \(2\).  Alors, d'apr\`es~\cite[\no2.2]{Ser87}, \(\rho|_{I_{\ell}}\) est irr\'eductible. En particulier, cela exclut d'avoir \(\G{\ell}\subset\G{K}\), i.e.~\(\ell\) d\'ecompos\'e dans \(K\). Autrement dit, \(\ell\) est inerte ou ramifi\'e dans \(K\) et comme on a suppos\'e \(2\le k\le \ell-1\), on a 
	\[
	\rho|_{I_{\ell}}=
	\begin{pmatrix}
	\psi_2^{k-1} & 0 \\
	0 & \psi_2^{\ell(k-1)} \\
	\end{pmatrix},
	\]
	o\`u $\psi_2$ est un caract\`ere fondamental de niveau \(2\) (pour sa d\'efinition, voir \cite[\no1.7]{Ser72}). On trouve que le caract\`ere \(\psi_2^{k-1}\) est d'ordre~
	\[\frac{\ell^2-1}{\pgcd(\ell^2-1,k-1)}>2\]
	et n'est donc trivial sur aucun sous-groupe de~\(I_{\ell}\) d'indice~\(\le2\).  Par cons\'equent, tant \(\blabla\) que~\(\blabla'\) sont ramifi\'es en~\(\mathfrak{p}_\place\) et, quitte \`a remplacer $\blabla$ avec $\blabla'$, on a que~\(\blabla(\tau)=\psi_2^{k-1}(\tau)\) pour tout~\(\tau\in I_{\mathfrak{p}_\place}\).
	
	Supposons enfin \(\ell\) ramifi\'e dans \(K\). Comme pr\'ec\'edemment, l'image projective $\proj\rho(I_{\ell})$ n'est pas contenue dans~$C_n$. Or, \(\proj\rho(I_{\ell})\) est cyclique et s'identifie \`a l'image de \(\psi_2^{(\ell-1)(k-1)}\). On en d\'eduit que \(\proj\rho(I_{\ell})\) est d'ordre~\(2\), puis que \(2\cdot\pgcd\bigl(\ell^2-1,(\ell-1)(k-1)\bigr)=\ell^2-1\), et finalement \(\ell=2k-3\).
\end{enumerate}

On r\'esume les r\'esultats ci-dessus dans la proposition suivante.
\begin{proposition}\label{prop:ramification}
Avec les hypoth\`eses et notations de ce paragraphe, on peut supposer que l'on est dans l'une des situations suivantes.
\begin{enumerate}[label=\textup{(}\alph*\textup{)}]
\item\label{item:prop_ramification_niv1} Supposons $\blaloc$ et $\blaloc'$ de niveau \(1\). Alors, 
	\begin{enumerate}[label=\ref{item:prop_ramification_niv1}-\arabic*.]
	\item\label{item:niveau_1_l_dec} soit \(\ell\) est d\'ecompos\'e dans \(K\) et dans ce cas \(\blabla\) est non ramifi\'e en \(\sigma(\mathfrak{p}_\place)\), mod\'er\'ement ramifi\'e en \(\mathfrak{p}_\place\) et pour tout \(\tau\in I_{\mathfrak{p}_\place}\), on a \(\blabla(\tau)=\chi^{k-1}(\tau)\), o\`u~$\chi$ d\'esigne le caract\`ere cyclotomique mod~\(\ell\).
	\item soit \(\ell\) est ramifi\'e dans \(K\) et dans ce cas on a \(\ell=2k-1\) et \(\blabla\) est  non ramifi\'e en \(\mathfrak{p}_\place\).
	\end{enumerate}
	\item\label{item:prop_ramification_niv2} Supposons $\blaloc$ et $\blaloc'$ de niveau \(2\). Alors, 
\begin{enumerate}[label=\ref{item:prop_ramification_niv2}-\arabic*.]
\item soit \(\ell\) est inerte dans~\(K\);
\item soit \(\ell\) est ramifi\'e dans \(K\) et \(\ell=2k-3\). 
\end{enumerate}
Dans les deux cas, \(\blabla\) est ramifi\'e en \(\mathfrak{p}_\place\) et pour tout \(\tau\in I_{\mathfrak{p}_\place}\), on a
\(\blabla(\tau)=\psi_2^{k-1}(\tau)\), avec \(\psi_2\) caract\`ere fondamental de niveau~\(2\). 
\end{enumerate}
\end{proposition}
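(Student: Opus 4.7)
The plan is to match two descriptions of $\rho|_{I_\ell}$: on one side, Serre's explicit formula for the weight $k=k(\rho)$ in terms of fundamental characters of level~$1$ or~$2$; on the other, the decomposition induced by $\rho|_{\GK}\simeq\blabla\oplus\blabla'$, restricted to the inertia subgroup $I_{\mathfrak{p}_\place}$ (which equals $I_\ell$ when $\ell$ is unramified in~$K$, and has index~$2$ otherwise). Before the case analysis I would invoke the classification of the finite subgroups of $\GL_2(\FFbar)$ (\cite[Theorem~3.4]{Blo67}) to conclude that the image of $\rho$ has order prime to~$\ell$, so $\rho|_{I_\ell}$ factors through tame inertia and diagonalises over $\FFbar$ as $\blaloc\oplus\blaloc'$ with $\blaloc,\blaloc'$ of the same level.

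In the level~$1$ case, Serre's formula \cite[(2.3.2)]{Ser87} combined with $2\le k\le\ell-1$ pins $\rho|_{I_\ell}$ down to $\mathrm{diag}(1,\chi^{k-1})$ with $\chi^{k-1}$ non-trivial, and I then split on how $\ell$ behaves in~$K$. The inert subcase is excluded by the fact that $\sigma$ normalises $I_{\mathfrak{p}_\place}=I_\ell$, so the conjugation $\widehat\blabla(\tau)=\blabla(\sigma^{-1}\tau\sigma)$ forces $\blabla|_{I_\ell}$ and $\blabla'|_{I_\ell}$ to have the same image in $\FFstar$, incompatible with one being trivial and the other equal to $\chi^{k-1}$. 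In the split subcase $\sigma$ swaps the two primes above~$\ell$, and swapping $v$ with $\rho(\sigma)v$ if needed arranges $\blabla|_{I_{\mathfrak{p}_\place}}=\chi^{k-1}$ with $\blabla$ unramified at $\sigma(\mathfrak{p}_\place)$. In the ramified subcase, the very definition of~$K$ forces $\proj\rho(I_\ell)\not\subseteq C_n$; since $\proj\rho(I_\ell)$ is cyclic and since any cyclic subgroup of $D_n$ meeting $D_n\setminus C_n$ has order~$2$, the character $\chi^{k-1}$ must have order exactly~$2$. The divisibility $(\ell-1)/2\mid k-1$ together with $k-1\le\ell-2$ then gives $\ell=2k-1$, and $\chi^{k-1}$ dies on the index-$2$ subgroup $I_{\mathfrak{p}_\place}$, so $\blabla$ is unramified at~$\mathfrak{p}_\place$.

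The level~$2$ case is parallel. Serre's recipe produces $\rho|_{I_\ell}\simeq\mathrm{diag}(\psi_2^{k-1},\psi_2^{\ell(k-1)})$, and its irreducibility (\cite[\no2.2]{Ser87}) forbids $\G{\ell}\subseteq\GK$, ruling out the split subcase. A short computation shows that $\psi_2^{k-1}$ has order strictly greater than~$2$ under the bounds on~$k$, hence remains non-trivial on the index-$\le 2$ subgroup $I_{\mathfrak{p}_\place}$; after possibly exchanging $\blabla$ and $\blabla'$, this yields $\blabla|_{I_{\mathfrak{p}_\place}}=\psi_2^{k-1}$ and $\blabla$ ramified at~$\mathfrak{p}_\place$. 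In the ramified subcase, the same $\proj\rho(I_\ell)\not\subseteq C_n$ argument, applied now to the ratio character $\psi_2^{(\ell-1)(k-1)}$, whose order is $(\ell+1)/\pgcd(\ell+1,k-1)$, forces this order to equal~$2$, so that the divisibility $(\ell+1)/2\mid k-1$ gives $\ell=2k-3$.

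The one step requiring some care is the conjugation bookkeeping: distinguishing whether a chosen $\sigma\in\GQ\setminus\GK$ stabilises $\mathfrak{p}_\place$ (the source of the contradiction in the inert subcase of level~$1$) or swaps it with $\sigma(\mathfrak{p}_\place)$ (what permits the reparametrisation in the split subcase). Everything else reduces to a direct application of Serre's weight recipe and elementary manipulations of the orders of $\chi^{k-1}$, $\psi_2^{k-1}$, and their Frobenius conjugates.
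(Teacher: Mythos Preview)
Your proposal is correct and follows essentially the same approach as the paper: Serre's weight recipe for $\rho|_{I_\ell}$, then the same case split on the behaviour of~$\ell$ in~$K$, with the cyclic-but-not-in-$C_n$-hence-order-$2$ argument yielding $\ell=2k-1$ and $\ell=2k-3$ in the two ramified subcases. The only cosmetic difference is your phrasing of the level-$1$ inert exclusion via $\widehat\blabla$ and $\blabla$ having the same image on~$I_{\mathfrak{p}_\place}$, where the paper instead observes that $\blabla$ (or~$\blabla'$) would be simultaneously ramified and unramified at~$\mathfrak{p}_\place=\sigma(\mathfrak{p}_\place)$.
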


\subsection{} On \'etudie ici la variation du conducteur d'Artin par rapport \`a l'op\'eration d'induction. On rappelle que l'on a 
$\rho=\Ind^{\GQ}_{\GK}(\blabla)$. On d\'esigne par~\(\mathfrak{N}(\rho)\) (resp.~$\mathfrak{N}(\blabla)$) le conducteur d'Artin de~\(\rho\) (resp.~de \(\blabla\)) d\'efini dans~\cite[Chapitre~VI, \S3]{Ser68}. Ce sont, par d\'efinition, des id\'eaux de $\Z$ et de $\mathcal{O}_K$, respectivement. On rappelle que, suivant la notation de Serre~\cite[\no1.2]{Ser87}, $N(\rho)$ d\'esigne le g\'en\'erateur positif de la partie premi\`ere \`a~\(\ell\) de~$\mathfrak{N}(\rho)$.

Pour un entier naturel~\(n\), on note \(n^{(\ell)}\) sa partie premi\`ere \`a~\(\ell\). Le r\'esultat suivant r\'esulte de~\cite[Corollary~1]{Tag02}.
\begin{lemme}[Taguchi]\label{lem:cond_Artin} Avec les notations et hypoth\`eses pr\'ec\'edentes, on a
\[
N(\rho)=D_K^{(\ell)}\norm\bigl(\mathfrak{N}(\blabla)\bigr)^{(\ell)}.
\]
\end{lemme}
\begin{remarque} Dans \cite{Tag02} aucune preuve du Corollary 1 n'est offerte et l'auteur fait r\'ef\'erence \`a \cite[Chapter~VI, \S3]{Ser68}, o\`u le proc\'ed\'e de \og globalisation\fg\ est trait\'e dans le cas de repr\'esentations \`a coefficients complexes, et cela en utilisant la th\'eorie des caract\`eres. Il est bien connu (voir, par exemple, \cite[\S 15.5]{Ser78-2}) que cette th\'eorie s'\'etend aux repr\'esentations en caract\'eristique $\ell$ d'un groupe d'ordre premier \`a $\ell$, ce qui est le contexte dans lequel nous travaillons \`a pr\'esent. Les arguments de \cite[Chapitre~VI, \S3]{Ser68} s'adaptent alors pour prouver la validit\'e du r\'esultat cit\'e de Taguchi.
\end{remarque}

\subsection{}\label{ss:calculTCCL} Dans ce paragraphe, on fixe une extension finie $L/\Q_{\ell}$. On note \(k_L\) son corps r\'esiduel de sorte que l'on a \(k_L=\F_q\) o\`u \(q=\left|k_L\right|\) d\'esigne le cardinal de~\(k_L\). On note \(L^{\mathrm{ab}}\) (resp. $L^\mathrm{nr}$) l'extension ab\'elienne (resp. non ramifi\'ee) maximale de \(L\) contenue dans $\Qellbar$. Le morphisme \(\theta_{q-1}\colon \operatorname{Gal}(\Qellbar/L^\mathrm{nr})\to k_L^\times\) construit par Serre dans \cite[\S 1.3]{Ser72} (et dont les conjugu\'es sur~$\F_q$ forment l'ensemble des caract\`eres fondamentaux de niveau~$r$ de $\Qellbar/L$, o\`u~$q=\ell^r$) se factorise par le groupe \(I_t(L^{\mathrm{ab}}/L)\) d'inertie mod\'er\'ee de l'extension \(L^{\mathrm{ab}}/L\) et on note encore 
\[
\theta_{q-1}\colon I_t(L^{\mathrm{ab}}/L)\longrightarrow k_L^{\times}
\]
le morphisme pass\'e au quotient. Par ailleurs, l'application de r\'eciprocit\'e de la th\'eorie du corps de classes locale fournit un homomorphisme surjectif
\[
\omega\colon k_L^{\times}\longrightarrow I_t(L^{\mathrm{ab}}/L).
\]
Un calcul, d\'etaill\'e dans \cite[Proposition 3]{Ser72}, montre alors que l'on a 
\begin{equation}\label{eq:calculTCCL}
\theta_{q-1}(\omega(x))=x^{-1},
\end{equation}
pour tout \(x\in k_L^{\times}\). Ce r\'esultat nous sera utile dans la d\'emonstration qui suit.

\subsection{D\'emonstration du th\'eor\`eme principal}\label{subsec:dem_thm_main}
On reprend les notations et hypoth\`eses du paragraphe~\ref{subsec:descr_loc}. Soit~$\widetilde{\blabla}\colon\GK\rightarrow\Zbar^\times\subset\C^\times$ le rel\`evement multiplicatif de~$\blabla$ d\'efini au paragraphe~\ref{subsec:teichmuller}. Il se factorise par l'ab\'elianis\'e~\(\GK^\text{ab}\) de~\(\GK\). On note
\[
\rec_K\colon\AK\longrightarrow\GK^\text{ab}
\]
l'application de r\'eciprocit\'e de la th\'eorie du corps de classes globale et on pose
\[
\alpha=\widetilde{\blabla}\circ\rec_K\colon\AK\longrightarrow\C^\times.
\]
C'est un \gross\ de~\(K\) d'image finie, de conducteur~\(\cond_{\alpha}\) \'egal au conducteur d'Artin de~\(\blabla\) (voir~\cite[Chapitre~VI]{Ser68}, notamment page~110). 
D'apr\`es le Lemme~\ref{lem:cond_Artin} et la construction de~$\alpha$, on a
\begin{equation}\label{eq:N=D*norm}
N(\rho)
=D_K^{(\ell)}\norm\bigl(\mathfrak{N}(\blabla)\bigr)^{(\ell)}
=D_K^{(\ell)}\norm(\cond_{\alpha})^{(\ell)}.
\end{equation}

Soit~\(\delta\) le \gross\ de~\(K\) fourni par la Proposition~\ref{prop:TCC} appliqu\'ee au \gross\ \(\alpha\) ci-dessus et \`a l'entier~$k=k(\rho)$. On d\'efinit un caract\`ere, not\'e~$\delta_{\mathrm{H}}$, du groupe des id\'eaux fractionnaires de $K$ premiers \`a~$\cond_\delta$ de la fa\c con suivante~:
\[
\delta_{\mathrm{H}}(\mathfrak{p})=\delta(\pi_\mathfrak{p})\quad\text{pour tout premier }\mathfrak{p}\nmid \cond_\delta\text{ de }\mathcal{O}_K,
\]
o\`u l'on note encore $\pi_\mathfrak{p}\in\Afin$  l'id\`ele ayant composantes triviales en dehors de la place induite par~$\mathfrak{p}$ et qui co\"incide avec  l'uniformisante~$\pi_\mathfrak{p}$ choisie en cette place. 
Soit $\eta$ la fonction d\'efinie pour~$m\in\Z$ par
\[
\eta(m)=\frac{\delta_{\mathrm{H}}\left(m\mathcal{O}_K\right)}{m^{k(\rho)-1}},
\]
o\`u l'on convient que $\delta_{\mathrm{H}}\left(m\mathcal{O}_K\right)=0$ lorsque $m\mathcal{O}_K$ et~$\cond_\delta$ ne sont pas premiers entre eux. Alors, $\eta$ induit un caract\`ere de Dirichlet modulo~$M=\norm(\cond_\delta)$ (voir le Lemme~\ref{lemme:conducteurs} pour un r\'esultat plus pr\'ecis).

Soit enfin $\left(\frac{-D_K}{\cdot}\right)$ le symbole de Kronecker 
correspondant au corps~$K$ vu comme caract\`ere de Dirichlet 
modulo~$D_K$ et posons $\varepsilon=\left(\frac{-D_K}{\cdot}\right)\eta$, vu 
comme caract\`ere de Dirichlet modulo~$MD_K$. Pour~\(z\in\C\), de partie 
imaginaire~\(\Im(z)>0\), on pose alors
\[
g_{\delta}(z)=\sum_{\substack{(\mathfrak{a},\mathfrak{\cond_\delta})=1 \\ \mathfrak{a}\textrm{ entier}}}
\delta_{\mathrm{H}}(\mathfrak{a})q^{\norm(\mathfrak{a})}=\sum_{n\ge1}c_nq^n,\quad\text{avec }q=e^{2i\pi z}.
\]
Le r\'esultat suivant est d\^u \`a Hecke et Shimura (voir~\cite[page~717]{Hec59} ainsi que~\cite[Lemma~3]{Shi71b} et~\cite[page~138]{Shi72}).
\begin{theoreme}[Hecke, Shimura]\label{thm:Hecke}
La s\'erie $g_{\delta}$ est le d\'eveloppement de Fourier d'une newform de poids 
$k(\rho)$, niveau~$MD_K$ et caract\`ere~$\varepsilon$ \`a multiplication 
complexe par le corps~\(K\).
\end{theoreme}
Dans la d\'emonstration du Th\'eor\`eme~\ref{thm:main} ci-dessous, on note pour simplifier $k$ \`a la place de~$k(\rho)$. D'apr\`es la Proposition~\ref{prop:TCC}, $\cond_\alpha$ et~\(\cond_\delta\) co\"incident hors de~$\place$. En particulier, on a 
\begin{equation}\label{eq:cond=cond}
N(\rho)=D_K^{(\ell)}\norm(\cond_{\delta})^{(\ell)}=(MD_K)^{(\ell)}.
\end{equation}
On calcule \`a pr\'esent la valuation de~\(MD_K\) en~\(\ell\) afin de 
d\'eterminer l'entier $MD_K$ lui-m\^eme. On proc\`ede suivant les 
diff\'erents cas de la Proposition~\ref{prop:ramification} dont on reprend la 
terminologie et la notation.
\begin{enumerate}[label=(\alph*)]
\item On distingue deux cas~:
\begin{enumerate}[label=\ref{item:prop_ramification_niv1}-\arabic*.]
	\item Supposons \(\ell\) d\'ecompos\'e dans \(K\). D'apr\`es la Proposition~\ref{prop:ramification}, \(\blabla\) est mod\'er\'ement ramifi\'e en~\(\mathfrak{p}_\place\) et pour tout \(\tau\in I_{\mathfrak{p}_\place}\), on a \(\blabla(\tau)=\chi^{k-1}(\tau)\). Il suit que~$\ord_\place(\cond_\alpha)=1$ car il en est ainsi pour $\widetilde{\chi}\circ\rec_K$. Par ailleurs, le compl\'et\'e \(K_\place\) de \(K\) en \(\place\) s'identifie \`a~\(\Q_{\ell}\) et d'apr\`es \cite[Proposition~8]{Ser72} le caract\`ere fondamental~$\theta_{\ell-1}$ de niveau~\(1\) est le caract\`ere cyclotomique~\(\chi\). Soit $u\in\mathcal{O}_\place^\times$. On pose $x=\overline{u}\in\F_\ell^\times$. Avec les notations des paragraphes~\ref{subsec:teichmuller} et~\ref{ss:calculTCCL}, il d\'ecoule de l'\'egalit\'e~(\ref{eq:calculTCCL}) et de la compatibilit\'e entre les th\'eories du corps de classes locale et globale que l'on a 
	\begin{equation*}
\alpha_\place(u)=\widetilde{\blabla(\omega(x))}=\widetilde{\theta_{\ell-1}(\omega(x))}^{k-1}=\widetilde{x}^{1-k}=\widetilde{\overline{u}}^{1-k}.
\end{equation*}
D'apr\`es le point~\ref{condition:falpha=1} de la Proposition~\ref{prop:TCC}, on 
a donc~\(\ord_\place(\cond_\delta)=0\). D'o\`u il vient 
$\ord_{\ell}(MD_K)=0$ et $MD_K=N(\rho)$ d'apr\`es 
\eqref{eq:cond=cond}.
	\item Supposons \(\ell\) ramifi\'e dans \(K\), donc $\ell\mid D_K$. 
D'apr\`es la Proposition~\ref{prop:ramification}, \(\blabla\) est non ramifi\'e 
en~\(\mathfrak{p}_\place\) et \(\ell=2k-1\). On 
a~\(\ord_\place(\cond_\alpha)=0\) et $\ell(=2k-1)>k$. En particulier, on 
d\'eduit du point~\ref{condition:falpha<1} de la Proposition~\ref{prop:TCC} que 
l'on a~\(\ord_\place(\cond_\delta)=1\), puis \(\ord_{\ell}(MD_K)=2\) 
et~\(MD_K=\ell^2N(\rho)\).
	\end{enumerate}
\item D'apr\`es la Proposition~\ref{prop:ramification}, \(\ell\) est inerte ou ramifi\'e dans~\(K\), \(\blabla\) est ramifi\'e en~\(\mathfrak{p}_\place\) et pour tout \(\tau\in I_{\mathfrak{p}_\place}\), on a \(\blabla(\tau)=\psi_2^{k-1}(\tau)\), avec \(\psi_2\) caract\`ere fondamental de niveau~\(2\). Par suite, \(\blabla\) est mod\'er\'ement ramifi\'e et \(\ord_\place(\cond_{\alpha})=1\). Quitte \`a remplacer \(\blabla\) par \(\blabla'\), on peut de plus supposer que l'on a l'\'egalit\'e \(\blabla=\theta_{\ell^2-1}^{k-1}\) entre caract\`eres de~\(I_{\mathfrak{p}_\place}\). 
\begin{enumerate}[label=\ref{item:prop_ramification_niv2}-\arabic*.]
\item Supposons \(\ell\) inerte dans~\(K\). Dans ce cas, l'extension \(K_\place/\Q_{\ell}\) est quadratique non ramifi\'ee. Soit $u\in\mathcal{O}_\place^\times$. On pose $x=\overline{u}\in\F_{\ell^2}^\times$. D'apr\`es  l'\'egalit\'e~(\ref{eq:calculTCCL}), on a comme pr\'ec\'edemment, 
\begin{equation*}
\alpha_\place(u)=\widetilde{\blabla(\omega(x))}=\widetilde{\theta_{\ell^2-1}(\omega(x))}^{k-1}=\widetilde{x}^{1-k}=\widetilde{\overline{u}}^{1-k}.
\end{equation*}
D'apr\`es le point~\ref{condition:falpha=1} de la Proposition~\ref{prop:TCC}, on 
a donc~\(\ord_\place({\cond_\delta})=0\). D'o\`u il 
vient~\(MD_K=N(\rho)\).
\item Supposons enfin \(\ell\) ramifi\'e dans \(K\). Dans ce cas, l'extension \(K_\place/\Q_{\ell}\) est quadratique ramifi\'ee. Soit $u\in\mathcal{O}_\place^\times$. On pose $x=\overline{u}\in\F_\ell^\times$. Comme le produit des caract\`eres fondamentaux de niveau \(2\) est le caract\`ere fondamental de niveau \(1\), on d\'eduit comme ci-dessus des \'egalit\'es \(\ell=2k-3\) et~(\ref{eq:calculTCCL}) que l'on a 
\begin{align*}
\alpha_\place(u)^2&=\widetilde{\blabla(\omega(x))^2}=\widetilde{\theta_{\ell^2-1}^{2(k-1)}(\omega(x))}=\widetilde{\theta_{\ell^2-1}^{\ell+1}(\omega(x))} \\
&=\widetilde{\theta_{\ell-1}(\omega(x))}=\widetilde{x}^{-1}=\widetilde{\overline{u}}^{-1}.
\end{align*}
Or, on a 
$\widetilde{\overline{u}}^{2(1-k)}=\widetilde{x}^{-(\ell+1)}=\widetilde{x}^{-2}
=\widetilde{\overline{u}}^{-2}$, qui n'est pas \'egal \`a 
$\widetilde{\overline{u}}^{-1}$ d\`es lors que $\overline{u}\neq 1$~: en 
particulier~$\alpha_\place(u)\neq \widetilde{\overline{u}}^{1-k}$ d\`es lors que 
$u\in\mathcal{O}_\place^\times\setminus\mathcal{O}_v^{(1)}$. D'apr\`es le 
point~\ref{condition:falpha=1} de la Proposition~\ref{prop:TCC}, on a 
donc~\(\ord_\place({\cond_\delta})=1\). D'o\`u il 
vient~\(\ord_{\ell}(MD_K)=2\), puis~
\(MD_K=\ell^2 N(\rho)\). 
\end{enumerate}
\end{enumerate}
On a donc  montr\'e que l'on a
\begin{equation}\label{eq:DM=N_split}
MD_K=\left\{
\begin{array}{ll}
N(\rho) & \textrm{si \(\ell\) est non ramifi\'e dans \(K\)}; \\
\ell^2N(\rho) & \textrm{si \(\ell\) est ramifi\'e dans~\(K\)}. \\
\end{array}
\right.
\end{equation}

Prouvons \`a pr\'esent que $\rho$ provient bien de~$g_\delta$ (par r\'eduction modulo~$\place$)~: pour ce faire, nous allons v\'erifier que pour tout nombre premier $q\nmid N(\rho)\ell$, le polyn\^ome caract\'eristique de~$\rho(\Frob_q)$ est la r\'eduction de
\begin{equation*}
X^2-c_qX+\varepsilon(q)q^{k-1},
\end{equation*}
o\`u~\(\Frob_q\) est un repr\'esentant de Frobenius en~\(q\) dans~\(\GQ\).
\begin{description}
\item[Cas inerte] On suppose~\(q\) inerte dans~\(K\), de sorte qu'on peut 
choisir $\sigma=\Frob_q\in\GQ\setminus\GK$ au d\'ebut du 
paragraphe~\ref{subsec:descr_loc}. On pose $\mathfrak{q}=q\mathcal{O}_K$. 
On a d'une part les \'egalit\'es $c_q=0$ 
et~\(\varepsilon(q)q^{k-1}=-\delta_{\mathrm{H}}(\mathfrak{q})\). D'autre part, 
l'\'ecriture \eqref{eq:rho_sigma} entra\^ine~\({\tr \rho(\Frob_q)=0}\) ainsi que 
$\det\rho(\Frob_q)=-\blabla\left(\Frob_q^2\right)$. La th\'eorie du corps de 
classes locale et la Proposition~\ref{prop:TCC} donnent alors
\[
\blabla\left(\Frob_q^2\right)=\overline{\alpha(\pi_{\mathfrak{q}})}=\overline{\delta(\pi_{\mathfrak{q}})}=\overline{\delta_\mathrm{H}(\mathfrak{q})}.
\]
\item[Cas d\'ecompos\'e] On suppose~\(q\) d\'ecompos\'e dans~\(K\) et on note~\(\mathfrak{q},\mathfrak{q}'\) les id\'eaux premiers de~\(\entiers[K]\) au-dessus de~\(q\). Alors $\Frob_q\in\GK$ est une substitution de Frobenius en~$\mathfrak{q}$ et en~$\mathfrak{q'}$. On a, d'une part, $c_q=\delta_{\mathrm{H}}(\mathfrak{q})+\delta_{\mathrm{H}}(\mathfrak{q}')$ et \(\varepsilon(q)q^{k-1}=\delta_{\mathrm{H}}(\mathfrak{q}\mathfrak{q'})\). D'autre part, on a comme pr\'ecedemment, \[
\tr\rho(\Frob_q)=\blabla(\Frob_q)+\widehat{\blabla}(\Frob_q)
=\overline{\alpha(\pi_{\mathfrak{q}})}+\overline{\alpha(\pi_{\mathfrak{q}'})}
=\overline{\delta(\pi_{\mathfrak{q}})}+\overline{\delta(\pi_{\mathfrak{q}'})}=\overline{c_q}
\]
et
\[
\det\rho(\Frob_q)=\blabla(\Frob_q)\widehat{\blabla}(\Frob_q)
=\overline{\alpha(\pi_{\mathfrak{q}})}\, \overline{\alpha(\pi_{\mathfrak{q}'})}
=\overline{\delta(\pi_{\mathfrak{q}})}\, \overline{\delta(\pi_{\mathfrak{q}'})}
=\overline{\delta_{\mathrm{H}}(\mathfrak{q}\mathfrak{q'})}.
\]
\end{description}
On en d\'eduit le r\'esultat voulu avec~\cite[Lemme~3.2]{DeSe74}.

Pour terminer la d\'emonstration du Th\'eor\`eme~\ref{thm:main}, il reste \`a v\'erifier l'affirmation sur le caract\`ere. Soit $\varepsilon(\rho)$ le caract\`ere associ\'e \`a~$\rho$ par Serre comme dans~\cite[\no1.3]{Ser87}. On fait l'hypoth\`ese que~$\varepsilon(\rho)$ est trivial, i.e.~$\det\rho=\chi^{k-1}$ avec $\chi$ le caract\`ere cyclotomique mod~$\ell$. On doit alors montrer qu'il existe une forme \`a multiplication complexe v\'erifiant les conditions du th\'eor\`eme et dont le caract\`ere est trivial. Pour cela, on va consid\'erer une tordue de~$g_\delta$ par un caract\`ere de Dirichlet particulier.
On a la d\'ecomposition suivante de l'entier~$M$~:
\[
M=\norm(\cond_\delta)  =\prod_{\fp}\norm(\fp)^{\ord_\fp(\cond_\delta)}  = \prod_p p^{\sum_{\fp\mid p}f_{\fp/p}\ord_\fp(\cond_\delta)},
\]
o\`u $f_{\fp/p}$ d\'esigne le degr\'e r\'esiduel de l'id\'eal premier~$\fp$ de~$\mathcal{O}_K$ au-dessus du premier~$p$.
\begin{lemme}\label{lemme:v_p(M)}
Lorsque $\det\rho=\chi^{k-1}$ la valuation~$\ord_p(M)$ de~$M$ en tout nombre 
premier~$p$ ne divisant pas~$D_K$ est paire, \'egale \`a 
$2\ord_\fp(\cond_\delta)$ o\`u $\fp$ est un premier de $\mathcal{O}_K$ au-dessus 
de $p$.
\end{lemme}
\begin{proof}
Soit $p$ ne divisant pas~$D_K$ tel que~$\ord_p(M)>0$. Lorsque $p$ est 
inerte dans~$K$, avec $p\mathcal{O}_K=\fp$, le r\'esultat est imm\'ediat car 
alors $\ord_p(M)=2\ord_\fp(\cond_\delta)$. On suppose donc \`a pr\'esent $p$ 
d\'ecompos\'e dans~$K$, disons $p\mathcal{O}_K=\fp\bar{\fp}$. On note que $p$ 
est n\'ecessairement diff\'erent de~$\ell$ car lorsque~$\ell$ est d\'ecompos\'e 
dans~$K$, alors, gr\^ace \`a~\eqref{eq:DM=N_split}, on a 
$MD_K=N(\rho)$ qui est premier \`a~$\ell$. 
D'apr\`es~\eqref{eq:N=D*norm} on a 
$\ord_\fp(\cond_\delta)=\ord_\fp(\mathfrak{N}(\blabla))$, et de m\^eme 
pour~$\bar{\fp}$. Soit $I_\fp$ un groupe d'inertie en~$\fp$ dans~$\GK$. 
D'apr\`es l'hypoth\`ese $\det\rho=\chi^{k-1}$ et avec les notations du 
paragraphe~\ref{subsec:descr_loc}, on a 
$\blabla_{|I_\fp}=\widehat{\blabla}^{-1}_{|I_\fp}$ parce que 
$\chi_{|I_{\fp}}=1$. D'o\`u il vient 
\[
\ord_\fp(\mathfrak{N}(\blabla))
=\ord_\fp\left(\mathfrak{N}\left(\widehat{\blabla}^{-1}\right)\right)
=\ord_\fp\left(\mathfrak{N}\left(\widehat{\blabla}\right)\right)
=\ord_{\bar{\fp}}\left(\mathfrak{N}\left(\blabla\right)\right)
\]
puis $\ord_\fp(\cond_\delta)=\ord_{\bar{\fp}}(\cond_\delta)$ et $\ord_p(M)=\ord_\fp(\cond_\delta)+\ord_{\bar{\fp}}(\cond_\delta)=2\ord_\fp(\cond_\delta)$.
\end{proof}
On pose alors
\[
M'=\prod_{p\nmid D_K}p^{\ord_p(M)/2}\times \prod_{p\mid D_K}p^{x_p},
\]
o\`u pour tout nombre premier~$p$ divisant~$D_K$, on d\'efinit
\[
x_p=\left\{
\begin{array}{ll}
\ord_p(M)/2 & \text{si $\ord_p(M)$ est pair;} \\
(\ord_p(M)+1)/2 & \text{si $\ord_p(M)$ est impair.} \\
\end{array}
\right.
\]
D'apr\`es le lemme pr\'ec\'edent, c'est un entier naturel divisant~$M$ qui v\'erifie 
$\ord_\fp(M')\geq\ord_\fp(\cond_\delta)$ pour tout $\fp\mid \cond_\delta$: cela 
suit du Lemme \ref{lemme:v_p(M)} pour les $\fp$ premiers \`a $D_K$, et pour 
ceux qui divisent $D_K$ du calcul $\ord_{\fp}(M')=2\ord_p(M')\geq 
\ord_p(M)=\ord_{\fp}(\cond_\delta)$, o\`u on a not\'e $p=\norm(\fp)$.
\begin{lemme}\label{lemme:conducteurs}
Le caract\`ere $\varepsilon$ se factorise modulo~$M'$.
\end{lemme}
\begin{proof}
On reprend les notations du d\'ebut de ce paragraphe et on commence par 
comparer~$m^{k-1}\eta(m)$ avec $\delta_{\mathrm{f}}(m)=m^{k-1}$ pour un entier 
$m\equiv1\pmod{M'}$~:  
$M'$ et~$M$ ont les m\^emes diviseurs premiers, on a en particulier $\pgcd(m,M)=1$. On remarque tout 
d'abord que la formule
\[
\delta_{\mathrm{H}}(\fp)^{\ord_\fp(m\mathcal{O}_K)}=\delta(\pi_\fp)^{\ord_\fp(m\mathcal{O}_K)}
\]
est valable pour tout id\'eal premier~$\fp$ de~$\mathcal{O}_K$. En effet, lorsque $\ord_\fp(m\mathcal{O}_K)=0$, elle est triviale et lorsque $\ord_\fp(m\mathcal{O}_K)>0$, cela r\'esulte du fait que cela entra\^ine $\fp\nmid M=\norm(\cond_\delta)$ et par suite la formule n'est autre chose que la d\'efinition de $\delta_{\mathrm{H}}$.
Ainsi, on a 
\[
m^{k-1}\eta(m)=\delta_{\mathrm{H}}(m\mathcal{O}_K)=\prod_\fp\delta_{\mathrm{H}}(\fp)^{\ord_\fp(m\mathcal{O}_K)}=\prod_\fp\delta(\pi_\fp)^{\ord_\fp(m\mathcal{O}_K)}
\]
d'o\`u l'on d\'eduit $m^{k-1} =\delta_{\mathrm{f}}(m) = \prod_{\fp\mid m\mathcal{O}_K}\delta_\fp(m)\times \prod_{\fp\nmid m\mathcal{O}_K}\delta_\fp(m)$, puis
\begin{align*}
 m^{k-1} & =\prod_{\fp\mid m\mathcal{O}_K}\delta_\fp(\pi_\fp)^{\ord_\fp(m\mathcal{O}_K)}\times \prod_{\fp\nmid m\mathcal{O}_K}\delta_\fp(m) \\
 & =\prod_{\fp}\delta_\fp(\pi_\fp)^{\ord_\fp(m\mathcal{O}_K)}\times \prod_{\fp\nmid m\mathcal{O}_K}\delta_\fp(m)
  =m^{k-1}\eta(m)\times \prod_{\fp\nmid m\mathcal{O}_K}\delta_\fp(m).
\end{align*}
Notons $P(m)=\prod_{\fp\nmid m\mathcal{O}_K}\delta_\fp(m)=\eta(m)^{-1}$ le produit apparaissant \`a la derni\`ere ligne de la formule ci-dessus. Tout d'abord, on remarque que l'on a 
$\displaystyle{P(m)=\prod_{\fp\mid \cond_\delta}\delta_\fp(m)}$  car d'une part $\pgcd(\cond_\delta,m\mathcal{O}_K)=1$ et d'autre part, si $\fp\nmid m\mathcal{O}_K$ et~$\fp\nmid\cond_\delta$, alors $m\in\mathcal{O}_\fp^\times$, puis $\delta_\fp(m)=1$. 
Comme pour tout $\fp\mid\cond_\delta$ on a $\ord_\fp(m-1)\geq\ord_\fp(M')\geq\ord_\fp(\cond_\delta)$, 
par d\'efinition du conducteur d'un \gross, on en d\'eduit $P(m)=1$, puis $\eta(m)=1$. Maintenant on remarque que l'on a
\[
\varepsilon(m)=\eta(m) \left(\frac{-D_K}{m}\right)= 
\left(\frac{-D_K}{m}\right)=\pm1
\]
car $\eta$ se factorise modulo~$M'$. Or, $\varepsilon(m)$ se r\'eduit sur~$1$ modulo~$\ell$ par hypoth\`ese. On a donc $\varepsilon(m)=1$ et le r\'esultat voulu.
\end{proof}

Le caract\`ere $\varepsilon$, se r\'eduisant sur le caract\`ere trivial, est n\'ecessairement d'ordre une puissance de~$\ell$, disons~$\ell^h$. On pose alors~$\mu=\varepsilon^{(\ell^h-1)/2}$. C'est un caract\`ere de m\^eme ordre et de m\^eme conducteur que~$\varepsilon$ v\'erifiant~$\mu^2=\varepsilon^{-1}$. 
Pour~\(z\in\C\), de partie imaginaire~\(\Im(z)>0\), on pose alors
\[
g^\dag(z)=(g_\delta\otimes\mu)(z)=\sum_{n\geq1}\mu(n)c_nq^n,\quad\text{avec }q=e^{2i\pi z}.
\]
D'apr\`es~\cite[Proposition 3.64]{Shi71a}, $g^\dag$ est une forme propre de 
poids~$k$, de caract\`ere $\varepsilon\mu^2=1$ et de niveau~
$\ppcm(MD_K,r^2)$ o\`u $r$ est le conducteur de~$\varepsilon$ (ou 
de~$\mu$). Or, on v\'erifie que $M'^2$ divise le produit~$MD_K$. Ainsi, 
$\ppcm(MD_K,r^2)=MD_K$ d'apr\`es le Lemme~\ref{lemme:conducteurs}. 
Soit $g$ la newform associ\'ee \`a~$g^\dag$. Alors, $g$ est de niveau 
divisant~$MD_K$ et \`a multiplication complexe par~$K$. Comme~$\mu$ se 
r\'eduit sur le caract\`ere trivial, la r\'eduction de la repr\'esentation 
$\place$-adique associ\'ee \`a~$g$ est isomorphe \`a celle de $g_\delta$, puis 
\`a~$\rho$. Ceci ach\`eve la d\'emonstration du Th\'eor\`eme~\ref{thm:main}.

%
%

\section{Exemples num\'eriques}\label{sec:exemples}

Pour l'exemple ci-dessous nous avons utilis\'e le logiciel \verb'pari/gp' (\cite{PARI2}).

\subsection{}\label{subsec:Delta} Soit $\Delta=\sum_{n\geq1}\tau(n)q^n=q-24q^2+252q^3-1472q^4+4830q^5+\cdots$ l'unique forme parabolique normalis\'ee de poids~$12$ et de niveau~$1$. On note
\[
\rho_{\Delta,23}\colon\GQ\longrightarrow\GL_2(\F_{23})
\]
l'unique repr\'esentation galoisienne semi-simple non ramifi\'ee hors de~$23$ telle que
\[
\det \rho_{\Delta,23}(\Frob_p)=\tau(p)\pmod{23}\quad\text{et}\quad\tr\ \rho_{\Delta,23}(\Frob_p)=p^{11}\pmod{23}
\]
pour tout nombre premier~$p\not=23$. Elle est de poids~$k(\rho_{\Delta,23})=12$ et de conducteur~$N(\rho_{\Delta,23})=1$. 

Il r\'esulte des congruences de Wilton (\cite[page~2]{Wil30}) que la repr\'esentation~$\rho_{\Delta,23}$ est di\'edrale. Soit, plus explicitement, $H$ le corps de classes de Hilbert de $K=\Q(\sqrt{-23})$. C'est une extension de degr\'e~$6$ de~$\Q$ de groupe de Galois~$D_3$. Alors, $\rho_{\Delta,23}$ est isomorphe \`a la repr\'esentation
\[
\GQ\longrightarrow\Gal(H/\Q)\simeq D_3\stackrel{\sigma}{\longrightarrow}\GL_2(\Z)\longrightarrow\GL_2(\F_{23}),
\]
o\`u la derni\`ere fl\`eche est l'application de r\'eduction modulo~$23$ et $\sigma\colon D_3\rightarrow\GL_2(\Z)$ est l'unique repr\'esentation irr\'eductible de degr\'e~$2$ du groupe~$D_3$ (voir~\cite[\S 3.4]{Ser69}).

On explicite \`a pr\'esent la forme \`a multiplication complexe dont l'existence 
est garantie par le Th\'eor\`eme~\ref{thm:main}. Soit $\delta$ un \gross\ de~$K$ 
de type \`a l'infini~$(11,0)$ et conducteur~$\sqrt{-23}\mathcal{O}_K$ de sorte 
que, avec les notations 
du paragrahe~\ref{subsec:dem_thm_main}, 
$\eta=\left(\frac{-23}{\cdot}\right)$. La newform~$g_\delta$ associ\'ee 
\`a~$\delta$ est alors de poids~$12$, niveau~$23^2$, caract\`ere trivial et de corps des coefficients l'extension totalement r\'eelle~$L$  de degr\'e~$3$ de~$\Q$ engendr\'ee par une 
racine~$\alpha$ du polyn\^ome~$X^3-6X-3$ (pour les d\'etails, voir le fichier 
\verb'DeltaMod23.gp', disponible sur la page de pr\'epublication arXiv de ce travail). On a
\begin{equation*}
g_\delta=\sum_{n\geq1}c_nq^n= q+(-21\alpha^2 - 4\alpha + 84) q^2+\left(53\alpha^2 + 251\alpha - 212\right)q^3+\cdots
\end{equation*}

On pose
\[\Delta^\dag=\sum_{\substack{n\geq1 \\ 23\nmid n}}\tau(n)q^n=\sum_{n\geq1}\tau'(n)q^n
\] de sorte que 
\(
\Delta^\dag(z)=\Delta(z)-U_{23}(\Delta)(23z)\)
o\`u $U_{23}$ est l'op\'erateur de Hecke en~$23$ agissant sur les formes de poids~$12$ et de niveau divisible par~$23$. Ainsi, $\Delta^\dag$ et~$g_\delta$ sont des formes normalis\'ees de poids~$12$, niveau~$23^2$ et caract\`ere trivial qui sont propres pour l'alg\`ebre de Hecke engendr\'ee par les op\'erateurs~\(\left\{T_p,\ p\text{ premier },\ p\not=23\right\}\).

Soit~$\lambda_{23}=(\alpha-5)\mathcal{O}_L$ l'unique id\'eal premier de l'anneau~$\mathcal{O}_L$ des entiers de~$L$ ramifi\'e au-dessus de~$23$. On v\'erifie alors que l'on a
\[
\tau'(n)\equiv c_n\pmod{\lambda_{23}},\quad\text{pour tout entier }n\leq m,
\]
o\`u $m=\left[\mathrm{SL}_2(\Z):\Gamma_0\left(23^2\right)\right]=552$. Pour les entiers~$n$ divisibles par~$23$, cela r\'esulte imm\'ediatement des \'egalit\'es~$\tau'(n)=0$ et~$c_n=0$. Lorsque $23\nmid n$, on se ram\`ene par multiplicativit\'e des coefficients de Fourier au cas o\`u $n$ est premier et on utilise la commande \verb'CoefficientFormeCM' du fichier \verb'DeltaMod23.gp' (l'ensemble des calculs prend quelques secondes sur un ordinateur de bureau). D'apr\`es~\cite[Theorem~1]{Stu87}, il vient donc \(\tau'(n)\equiv c_n\pmod{\lambda_{23}}\), pour tout entier $n\geq1$, puis 
\[
\tau(p)\equiv c_p\pmod{\lambda_{23}},\quad\text{pour tout nombre premier }p\not=23.
\]
On conclut avec~\cite[Lemme~3.2]{DeSe74} que~$\rho_{\Delta,23}$ est isomorphe \`a la r\'eduction modulo~$\lambda_{23}$ de la repr\'esentation galoisienne $23$-adique associ\'ee \`a~$g_\delta$. En particulier, comme l'affirme le Th\'eor\`eme~\ref{thm:main}, $\rho_{\Delta,23}$ provient bien d'une forme \`a multiplication complexe de poids~$k(\rho_{\Delta,23})=12$ et de niveau $N'=23^2N(\rho_{\Delta,23})=23^2$.

\begin{remarque}
On constate ici que le niveau~$23^2$ est optimal. En effet, il n'existe pas de forme \`a multiplication complexe de poids~$12$ et de niveau~$1$ ou~$23$ congrue \`a~$\Delta$ modulo~$23$.
\end{remarque}

\subsection{} Le Corollaire~\ref{cor} s'applique \'egalement \`a la repr\'esentation modulo~$7$ attach\'ee \`a la courbe elliptique d'\'equation
\(
Y^2+ Y = X^3 - X^2 - 18507X - 989382
\)
not\'ee \href{http://www.lmfdb.org/EllipticCurve/Q/65533/a/1}{65533.a1} dans \verb'LMFDB' (\cite{lmfdb}).  La d\'etermination de la forme \`a multiplication complexe (de niveau~$71^2$) dont l'existence est garantie par le Corollaire~\ref{cor} s'effectue par une m\'ethode analogue \`a celle utilis\'ee pr\'ec\'edemment. Les d\'etails sont accessibles dans le fichier \verb'EMod7.gp', disponible sur la page de pr\'epublication arXiv de ce travail.

\end{document}